\documentclass[11pt]{article}
\usepackage{amssymb}

\usepackage{amsmath}
\usepackage{theorem}
\usepackage{here}
\usepackage[dvipdfmx]{color}

\newtheorem{lem}{Lemma}[section]

\newtheorem{prp}{Proposition}[section]
\theorembodyfont{\rmfamily}
\newtheorem{algo}{Algorithm}[section]
\makeatletter
\renewcommand{\theequation}{%
   \thesection.\arabic{equation}}
\@addtoreset{equation}{section}
\makeatother

\usepackage{comment} %
\usepackage{bm}
\usepackage[pdftex]{graphicx}

\def\al{{\alpha}}
\def\be{{\beta}}

\def\de{{\delta}}
\def\ep{{\varepsilon}}

\def\si{{\sigma}}

\def\bbe{{\text{\boldmath $\beta$}}}

\def\bep{{\text{\boldmath $\varepsilon$}}}

\def\alt{{\tilde \al}}
\def\bet{{\tilde \be}}

\def\bbet{{\widetilde \bbe}}

\def\sis{{\small \bsi}}

\def\Si{{\Sigma}}
\def\Ga{{\Gamma}}
\def\Om{{\Omega}}

\def\bSi{{\text{\boldmath $\Si$}}}
\def\bGa{{\text{\boldmath $\Ga$}}}
\def\bOm{{\text{\boldmath $\Om$}}}

\def\bPsi{{\text{\boldmath $\Psi$}}}

\def\bSit{{\widetilde \bSi}}

\def\i{{\text{\boldmath $i$}}}

\def\u{{\text{\boldmath $u$}}}
\def\v{{\text{\boldmath $v$}}}

\def\x{{\text{\boldmath $x$}}}
\def\y{{\text{\boldmath $y$}}}

\def\A{{\text{\boldmath $A$}}}
\def\B{{\text{\boldmath $B$}}}
\def\C{{\text{\boldmath $C$}}}

\def\I{{\text{\boldmath $I$}}}

\def\M{{\text{\boldmath $M$}}}

\def\O{{\text{\boldmath $O$}}}

\def\U{{\text{\boldmath $U$}}}
\def\V{{\text{\boldmath $V$}}}
\def\W{{\text{\boldmath $W$}}}
\def\X{{\text{\boldmath $X$}}}
\def\Y{{\text{\boldmath $Y$}}}

\def\ybt{{\tilde \y}}

\def\Xbt{{\widetilde \X}}

\def\tr{{\rm tr\,}}

\def\etr{{\rm etr\,}}

\def\[{{\text{\boldmath $[$}}}
\def\]{{\text{\boldmath $]$}}}

\def\/{{\Bigr/\!\!}}

\def\1r{{\rm (1)}}
\def\2r{{\rm (2)}}
\def\3r{{\rm (3)}}
\def\4r{{\rm (4)}}
\def\5r{{\rm (5)}}

\def\non{{\nonumber}}
\DeclareMathOperator*{\bdiag}{{\bf Diag\,}}
\def\bBe{{\text{\boldmath $\mathcal{B}$}}}
\def\sis{{{\si ^2}}}

\begin{document}
\title{Trace-Class Results for MCMC Algorithms for Student-t Regression Models}
\author{
Yasuyuki Hamura\footnote{Center for Spatial Information Science, The University of Tokyo, 
5-1-5 Kashiwanoha, Kashiwa, Chiba 277-8568, JAPAN. 
\newline{
E-Mail: yasu.stat@gmail.com}} \
}
\maketitle
\begin{abstract}
In this paper, we consider MCMC algorithms for Student-$t$ regression models. 
We investigate the efficiency of Markov chains based on the algorithms in terms of whether trace-class results hold or not. 
We first consider the case where the parameters follow a matrix-normal-inverse-Wishart distribution and show that the Markov operator associated with a standard data augmentation algorithm is trace-class. 
We next consider the case of an improper prior and univariate outcomes. 
In this case, the standard Markov operator is not trace-class but the Markov operator associated with a collpased Gibbs algorithm is trace-class. 
Finally, we consider the case of an improper prior and multivariate outcomes. 
We obtain a trace-class result for a parameter expanded data augmentation algorithm which is based on a univariate working parameter.

\par\vspace{4mm}
{\it Key words and phrases:\ Collapsing; Data augmentation; Parameter expansion; Student-$t$ regression models; Trace-class results. } 
\end{abstract}

\section{Introduction}
\label{sec:introduction}
Student-$t$ regression models are widely used for robust Bayesian inference. 
In order to obtain approximate posterior samples under Student-$t$ regression models, we usually take a data augmentation approach because the joint posterior distribution of the regression coefficients and error variance does not have a standard form and is difficult to directly sample from. 
In particular, we use the fact that Student's $t$-distribution is a scale mixture of normals and generate samples of mixing variables as well as the target parameters in order to approximate the posterior distribution. 

Efficiency of Markov chains based on such algorithms has been investigated by many authors. 
A popular criterion is geometric ergodicity. 
Sufficient conditions for this property were derived by \cite{hjkq2018} in the case where the prior distribution is improper and by \cite{bh2020} in the proper case. 
Recently, \cite{h2026} obtained conditions to cover the case of a Student-$t$ error density with few degrees of freedom. 

A stronger property of a Markov chain is that it is trace-class. 
There are several benefits related to this property (e.g., \cite{qh2018}). 
In particular, we can estimate the spectral gap of a trace-class Markov chain (\cite{qhk2019}), which is directly related to the geometric convergence rate of the chain. 
Although trace-class results for MCMC algorithms for regression models were obtained by \cite{jh2014} and \cite{qh2018}, their results are not applicable when we use error distributions with polynomial tails. 

In this paper, we obtain trace-class results for MCMC algorithms for Student-$t$ regression models. 
In Section \ref{sec:proper}, we consider the proper case where the prior distribution is normal-inverse gamma. 
In Section \ref{sec:improper}, we consider the improper case where the parameters follow the invariant distributions when the outcomes are univariate. 
We show that the Markov operator associated with a standard data augmentation algorithm is not trace-class and that the Markov operator associated with another algorithm is trace-class. 
In Section \ref{sec:improper_multi}, we consider the improper case when the outcomes are multivariate. 
We obtain a trace-class result for a parameter expanded data augmentation algorithm which is based on a univariate working parameter.

\section{The Case of a Proper Prior}
\label{sec:proper} 
In this section, we consider the multivariate linear regression model of \cite{bh2020}. 
We follow the notation of \cite{h2026}. 
Let $h \colon (0, \infty ) \to [0, \infty )$ be a normalized mixing density and suppose that 
\begin{align}
&\y _i \sim {1 \over | \bSi |^{1 / 2}} f_h ( \bSi ^{- 1 / 2} ( \y _i - \bBe ^{\top } \x _i )) \non 
\end{align}
for $i = 1, \dots , n$, where $\Y = ( \y _1 , \dots , \y _n )^{\top } \in \mathbb{R} ^{n \times d}$ and $\X = ( \x _1 , \dots , \x _n )^{\top } \in \mathbb{R} ^{n \times p}$ are outcome and explanatory variables, $\bBe \in \mathbb{R} ^{p \times d}$ and $\bSi > \O ^{(d)}$ are the matrix of regression coefficients and the covariance matrix, and where 
\begin{align}
f_h ( \bep ) &= \int_{0}^{\infty } {u^{d / 2} \over (2 \pi )^{d / 2}} \exp \Big( - {u \over 2} \| \bep \| ^2 \Big) h(u) du \text{,} \quad \bep \in \mathbb{R} ^d \text{,} \non 
\end{align}
is the error density. 
The prior distribution is given by 
\begin{align}
p( \bBe | \bSi ) p( \bSi ) &= {\rm{N}}_{p, d} ( \bBe | \B , \A , \bSi ) {\rm{IW}}_d ( \bSi | \nu , \C ) \text{,} \non 
\end{align}
where $\B \in \mathbb{R} ^{p \times d}$, $\A > \O ^{(p)}$, $\nu > d - 1$, and $\C > \O ^{(d)}$ are arbitrary; for the forms of the matrix normal and inverse Wishart densities, see Appendix A of \cite{bh2020}. 
The following data augmentation algorithm is from \cite{bh2020}. 

\begin{algo}
\label{algo:DA} 
The parameters $\bBe $ and $\bSi $ are updated in the following way. 
\begin{itemize}
\item
For each $i = 1, \dots , n$, sample $u_i \sim p( u_i | \bBe , \bSi , \Y )$, where 
\begin{align}
p( u_i | \bBe , \bSi , \y ) &\propto h( u_i ) {u_i}^{d / 2} \exp \{ - ( u_i / 2) ( \y _i - \bBe ^{\top } \x _i )^{\top } \bSi ^{- 1} ( \y _i - \bBe ^{\top } \x _i ) \} \text{.} \non 
\end{align}
Let $\U = \bdiag \u > \O ^{(n)}$. 
\item
Sample $\bSi \sim {\rm{IW}}_d (n + \nu , \bPsi ^{- 1} )$ and then $\bBe \sim {\rm{N}}_{p, d} ( \bGa , \bOm , \bSi )$, where 
\begin{align}
&\bPsi = \C ^{- 1} + \B ^{\top } \A ^{- 1} \B + \Y ^{\top } \U \Y - \bGa ^{\top } \bOm ^{- 1} \bGa \text{,} \non \\
&\bGa = ( \X ^{\top } \U \X + \A ^{- 1} )^{- 1} ( \X ^{\top } \U \Y + \A ^{- 1} \B ) \text{,} \quad \text{and} \non \\
&\bOm = ( \X ^{\top } \U \X + \A ^{- 1} )^{- 1} \text{.} \non 
\end{align}
\end{itemize}
\end{algo}

The following result is proved in the Supplemetary Material. 

\begin{prp}
\label{prp:proper} 
Let $a, b > 0$ and suppose that $h(u) = {\rm{Ga}} (u | a, b)$ for all $u \in (0, \infty )$. 
Then the Markov operator associated with Algorithm \ref{algo:DA} is trace-class. 
\end{prp}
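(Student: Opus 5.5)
The plan is to use the standard criterion for data augmentation chains. The Markov operator of the DA chain with Mtd $k(\bBe',\bSi'\mid\bBe,\bSi)=\int p(\bBe',\bSi'\mid\u,\Y)\,p(\u\mid\bBe,\bSi,\Y)\,d\u$ is trace-class if and only if
\begin{align}
\int\!\!\int k(\bBe,\bSi\mid\bBe,\bSi)\,d\bBe\,d\bSi
=\int_{(0,\infty)^n}\Big\{\int\!\!\int p(\bBe,\bSi\mid\u,\Y)\,p(\u\mid\bBe,\bSi,\Y)\,d\bBe\,d\bSi\Big\}\,d\u<\infty .\non
\end{align}
Here we use positivity of DA operators and the Tonelli theorem. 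I will therefore bound the inner integrand explicitly and integrate.

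\textbf{Step 1: the conditional of $\u$.} With $h={\rm Ga}(a,b)$, the variables $u_i\mid\bBe,\bSi$ are independent ${\rm Ga}(a+d/2,\,b+r_i/2)$, where $r_i=(\y_i-\bBe^\top\x_i)^\top\bSi^{-1}(\y_i-\bBe^\top\x_i)$. Hence
\begin{align}
p(\u\mid\bBe,\bSi,\Y)=\prod_i\frac{(b+r_i/2)^{a+d/2}}{\Gamma(a+d/2)}\,u_i^{a+d/2-1}e^{-bu_i}\,e^{-u_ir_i/2}.\non
\end{align}

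\textbf{Step 2: the conditional of $(\bBe,\bSi)$.} Write out $p(\bBe,\bSi\mid\u,\Y)={\rm N}_{p,d}(\bBe\mid\bGa,\bOm,\bSi)\,{\rm IW}_d(\bSi\mid n+\nu,\bPsi^{-1})$. It is proportional to $|\bSi|^{-(n+\nu+p+d+1)/2}\etr\{-\tfrac12\bSi^{-1}[\bPsi+(\bBe-\bGa)^\top\bOm^{-1}(\bBe-\bGa)]\}$, with normalizing constant involving $|\bOm|^{-d/2}|\bPsi|^{(n+\nu)/2}$. The key identity is
\begin{align}
\bPsi+(\bBe-\bGa)^\top\bOm^{-1}(\bBe-\bGa)=\C^{-1}+(\bBe-\B)^\top\A^{-1}(\bBe-\B)+\textstyle\sum_iu_i(\y_i-\bBe^\top\x_i)(\y_i-\bBe^\top\x_i)^\top,\non
\end{align}
so the exponent contains $-\tfrac12\sum_iu_ir_i$, which combines with the factor $e^{-u_ir_i/2}$ from Step 1 to give $e^{-\sum u_ir_i}$. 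This doubling is exactly what will produce integrability.

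\textbf{Step 3: bounding and integrating.} I would bound the awkward $\u$-dependent factors uniformly.
\begin{itemize}
\item Since $\bPsi\le\C^{-1}+\B^\top\A^{-1}\B+\Y^\top\U\Y$, we get $|\bPsi|^{(n+\nu)/2}\le C\,(1+\sum_iu_i)^{d(n+\nu)/2}$.
\item Since $\bOm^{-1}\le\A^{-1}+\X^\top\U\X$, we get $|\bOm|^{-d/2}\le C(1+\sum u_i)^{pd/2}$.
\item $(b+r_i/2)^{a+d/2}\le C(1+r_i)^{a+d/2}$.
\end{itemize}
With these bounds, integrate first over $\u$. The product of $u_i^{a+d/2-1}e^{-u_i(b+r_i)}$ times a polynomial in $\u$ integrates to something bounded by $C\prod_i(b+r_i)^{-(a+d/2)}$, up to a polynomial factor that is harmless because $b>0$. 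This cancels the $(b+r_i/2)^{a+d/2}$ factors, so it remains to show
\begin{align}
\int\!\!\int|\bSi|^{-(n+\nu+p+d+1)/2}\etr\{-\tfrac12\bSi^{-1}(\C^{-1}+(\bBe-\B)^\top\A^{-1}(\bBe-\B))\}\,d\bBe\,d\bSi<\infty.\non
\end{align}
This integral is finite because it is, up to constants, the unnormalized ${\rm N}_{p,d}\times{\rm IW}_d(\nu+n,\C)$ density, and the $\bSi$-power is integrable since $\nu+n>d-1$.

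\textbf{Main obstacle.} The hardest step is controlling the interplay between $\bPsi$, $\bOm$ and the polynomial factors in $\u$ when the integrals over $\u$ and $(\bBe,\bSi)$ are interchanged. Here is the concern. After integrating out $\u$, the polynomial growth $(1+\sum u_i)^{m}$ produces extra factors like $(b+r_i)^{-k}$, i.e., negative powers of $b+r_i$. These are bounded by $b^{-k}$, so they should be harmless.

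The residual worry is whether lower bounds such as $\bPsi\ge\C^{-1}$-type bounds are needed elsewhere. I would first try the crude bounds above, keeping $e^{-b u_i}$ to absorb all polynomial growth in $u_i$. If a factor of $|\bSi|$ is left unbalanced, I would fall back on integrating $\bBe$ exactly against the combined Gaussian with precision $\A^{-1}+2\X^\top\U\X$ before bounding.
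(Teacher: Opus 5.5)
Your proof is correct, and it takes a genuinely different and shorter route than the paper.

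\textbf{The paper's route.} The paper multiplies the two conditionals and completes the square in $\bBe$ for the combined exponent, with precision $\W=\X^\top\U\X+\bOm^{-1}$ and center $\V$. It treats the normalizing factor $\prod_i(b+r_i/2)^{d/2+a}$ of $p(\u\mid\bBe,\bSi,\Y)$ as an obstacle. It bounds that factor by a polynomial in $\tr(\bSi^{-1})$, $\tr\{(\bBe-\V)\bSi^{-1}(\bBe-\V)^\top\}$ and $\sum_i u_i$. It then integrates $\bBe$ against half of the Gaussian and absorbs the growth in $\u$ with $e^{-bu_i/2}$. Finally it needs a block-matrix (Schur complement) argument showing that the residual coefficient of $\bSi^{-1}$ is bounded below by a fixed positive definite matrix. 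Only then is the leftover polynomial in $\tr(\bSi^{-1})$ integrable against the inverse-Wishart kernel.

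\textbf{Your route.} You integrate $\u$ first. The bound $\int\prod_i u_i^{a+d/2-1}e^{-u_i(b+r_i)}(1+\sum_iu_i)^m\,d\u\le C\prod_i(b+r_i)^{-(a+d/2)}$ cancels that normalizing factor outright; it follows from $(1+\sum_iu_i)^m\le\prod_i(1+u_i)^m$ and $b+r_i\ge b$. What remains is the $\u$-free kernel of ${\rm N}_{p,d}(\B,\A,\bSi)\times{\rm IW}_d(n+\nu,\C)$. So you need no moment bounds in $\bSi^{-1}$ and no lower bound on the posterior scale matrix. Your determinant bounds are justified by $\C^{-1}\le\bPsi\le\C^{-1}+\B^\top\A^{-1}\B+\Y^\top\U\Y$ and $\bOm^{-1}=\A^{-1}+\X^\top\U\X$. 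The fallback in your last paragraph is therefore unnecessary.

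\textbf{A correction to your narrative.} The doubling to $e^{-\sum_iu_ir_i}$ is not what produces integrability. Had you simply dropped the nonpositive term $-\frac12\sum_iu_ir_i$ from the posterior exponent, the final ratio would be exactly $1$ rather than $\le 1$. The real mechanism has two parts. First, $p(\bBe,\bSi\mid\u,\Y)\le C(1+\sum_iu_i)^m\times(\text{prior kernel})$. Second, $\sup_{\bBe,\bSi}E[(1+\sum_iu_i)^m\mid\bBe,\bSi,\Y]<\infty$, because every gamma rate $b+r_i/2$ is at least $b>0$. Seen this way, your argument extends with minor changes to any mixing density $h$ with enough finite moments. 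By stochastic monotonicity in $r_i$, the supremum is attained at $r_i=0$. The paper's bounds, by contrast, rely on the explicit gamma normalizing constant.
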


\section{The Case of an Improper Prior and Univariate Responses}
\label{sec:improper} 
\subsection{The model and conditional distributions} 
\label{subsec:conditional_distributions} 
In this section, we consider the univariate Student-$t$ regression model under an improper prior. 
Specifically, we consider the model given by 
\begin{align}
&y_i \sim \int_{0}^{\infty } {\rm{N}} ( y_i | {\x _i}^{\top } \bbe , \sis / u_i ) {\rm{Ga}} ( u_i | a, b) d{u_i} \text{,} \quad i = 1, \dots , n \text{,} \non %
\end{align}
where $\y = ( y_i )_{i = 1}^{n} \in \mathbb{R} ^n$ and $\X = ( \x _1 , \dots , \x _n )^{\top } \in \mathbb{R} ^{n \times p}$ are the outcome and explanatory variables, $\u = ( u_i )_{i = 1}^{n} \in (0, \infty )^n$ are latent variables having the mixing distribution ${\rm{Ga}} (a, b)$ for $a, b > 0$, and $\bbe \in \mathbb{R} ^p$ and $\sis \in (0, \infty )$ are the regression coefficients and error variance. 
The prior is given by $( \bbe , \sis ) \sim 1 / \sis $. 

In the remainder of this section, we suppress the dependence on $\y $. 
The joint posterior distribution is given by 
\begin{align}
p( \bbe , \sis , \u ) &\propto {1 \over \sis } \prod_{i = 1}^{n} \Big[ {u_i}^{a - 1} e^{- b u_i} {{u_i}^{1 / 2} \over ( \sis )^{1 / 2}} \exp \Big\{ - {u_i \over 2 \sis } ( y_i - {\x _i}^{\top } \bbe )^2 \Big\} \Big] \text{.} \non 
\end{align}
Therefore, 
\begin{align}
p( \bbe , \sis | \u ) &\propto {1 \over ( \sis )^{1 + n / 2}} \exp \Big\{ - {1 \over 2 \sis } ( \y - \X \bbe )^{\top } \U ( \y - \X \bbe ) \Big\} \non \\
&\propto {\rm{IG}} \Big( \sis \Big| {n - p \over 2}, {\y ^{\top } \U \y - \y ^{\top } \U \X ( \X ^{\top } \U \X )^{- 1} \X ^{\top } \U \y \over 2} \Big) \non \\
&\quad \times {\rm{N}}_p ( \bbe | ( \X ^{\top } \U \X )^{- 1} \X ^{\top } \U \y , \sis ( \X ^{\top } \U \X )^{- 1} ) \text{,} \non 
\end{align}
where the second line follows since 
\begin{align}
( \y - \X \bbe )^{\top } \U ( \y - \X \bbe ) &= \bbe ^{\top } \X ^{\top } \U \X \bbe + \y ^{\top } \U \y - 2 \y ^{\top } \U \X \bbe \non \\
&= \{ \bbe - ( \X ^{\top } \U \X )^{- 1} \X ^{\top } \U \y \} ^{\top } \X ^{\top } \U \X \{ \bbe - ( \X ^{\top } \U \X )^{- 1} \X ^{\top } \U \y \} \non \\
&\quad + \y ^{\top } \U \y - \y ^{\top } \U \X ( \X ^{\top } \U \X )^{- 1} \X ^{\top } \U \y \text{.} \non 
\end{align}
Meanwhile, 
\begin{align}
p( \u , \sis | \bbe ) &\propto {1 \over ( \sis )^{1 + n / 2}} \prod_{i = 1}^{n} \Big( {u_i}^{1 / 2 + a - 1} \exp \Big[ - u_i \Big\{ {( y_i - {\x _i}^{\top } \bbe )^2 \over 2 \sis } + b \Big\} \Big] \Big) \non \\
&\propto \Big[ {1 \over ( \sis )^{1 + n / 2}} \prod_{i = 1}^{n} {1 \over \{ ( y_i - {\x _i}^{\top } \bbe )^2 / (2 \sis ) + b \} ^{1 / 2 + a}} \Big] \prod_{i = 1}^{n} {\rm{Ga}} \Big( u_i \Big| {1 \over 2} + a, {( y_i - {\x _i}^{\top } \bbe )^2 \over 2 \sis } + b \Big) \text{.} \non 
\end{align}

\subsection{An algorithm which is not trace class} 
\label{subsec:not_trace_class} 
The following algorithm is a standard data augmentation algorithm for the Student-$t$ regression model. 
\begin{algo}
\label{algo:not_trace_class} 
The parameters $\bbe $, $\sis $, and $\u $ are updated in the following way. 
\begin{itemize}
\item
Sample $( \bbe , \sis ) \sim p( \bbe , \sis | \u )$ by 
\begin{itemize}
\item
first sampling 
\begin{align}
\sis &\sim {\rm{IG}} \Big( \sis \Big| {n - p \over 2}, {\y ^{\top } \U \y - \y ^{\top } \U \X ( \X ^{\top } \U \X )^{- 1} \X ^{\top } \U \y \over 2} \Big) \non 
\end{align}
\item
and then sampling 
\begin{align}
\bbe &\sim {\rm{N}}_p ( \bbe | ( \X ^{\top } \U \X )^{- 1} \X ^{\top } \U \y , \sis ( \X ^{\top } \U \X )^{- 1} ) \text{.} \non 
\end{align}
\end{itemize}
\item
Sample $\u \sim p( \u | \bbe , \sis )$ by 
\begin{itemize}
\item
sampling 
\begin{align}
\u &\sim \prod_{i = 1}^{n} {\rm{Ga}} \Big( u_i \Big| {1 \over 2} + a, {( y_i - {\x _i}^{\top } \bbe )^2 \over 2 \sis } + b \Big) \text{.} \non 
\end{align}
\end{itemize}
\end{itemize}
\end{algo}

Although geometric ergodicity can be shown under some assumptions (see, for example, \cite{h2026}), we have the following result. 

\begin{prp}
\label{prp:not_trace_class} 
The Markov operator associated with Algorithm \ref{algo:not_trace_class} is not trace-class. 
\end{prp}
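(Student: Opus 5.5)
The plan is to use the standard criterion for data augmentation operators. The Markov operator of a DA chain with Markov transition density $k$ is trace-class if and only if
\begin{align}
\int k( ( \bbe , \sis ) | ( \bbe , \sis ) ) \, d\bbe \, d\sis < \infty \text{,} \non
\end{align}
and the integral can be rewritten as
\begin{align}
I = \int \int p( \bbe , \sis | \u ) \, p( \u | \bbe , \sis ) \, d\u \, d\bbe \, d\sis \text{.} \non
\end{align}
Since the operator is a positive self-adjoint operator on $L^2$ of the posterior, $I$ equals the sum of its eigenvalues. So it suffices to show $I = \infty$. (Equivalently, one can work with the $\u$-chain, whose diagonal integral is the same.) Everything is explicit, so the strategy is to find a region where the integrand fails to be integrable.

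The divergence should come from the scale direction. The conditional $p( \u | \bbe , \sis )$ is a product of gamma densities with rate $r_i = ( y_i - {\x _i}^{\top } \bbe )^2 / (2 \sis ) + b$. As $\sis \to \infty$ (with $\bbe$ in a compact set), $r_i \to b$. Hence $p( \u | \bbe , \sis )$ converges to the fixed density $\prod_i {\rm{Ga}} ( u_i | 1/2 + a, b )$ and does not concentrate.

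Meanwhile $p( \bbe , \sis | \u )$ is an inverse-gamma times normal density with scale $S(\u) = \y ^{\top } \U \y - \y ^{\top } \U \X ( \X ^{\top } \U \X )^{- 1} \X ^{\top } \U \y$. For fixed $\u$ in a compact set of $(0,\infty)^n$, this behaves in $\sis$ like $( \sis )^{-(n-p)/2 - 1} ( \sis )^{-p/2} = ( \sis )^{-n/2-1}$ for large $\sis$, which is integrable. So large $\sis$ alone does not obviously diverge. I would therefore instead scale $\u$ jointly: set $\u = t \v$ with $\v$ in a compact set and $t \to 0$. Then $S( t \v ) = t S( \v )$ and $\X ^{\top } \U \X = t \X ^{\top } {\rm diag}(\v) \X$. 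The posterior mass of $\sis$ given $\u$ sits at scale $t$, which matches the $\u$-update because $r_i \approx ( y_i - {\x _i}^{\top } \bbe )^2 / (2\sis)$ is of order $1/t$ when $\sis \asymp t$. Note that the $b$-term is negligible there.

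Concretely, I would restrict to the region $\{ \u = t\v ,\ \sis = t s ,\ \bbe \in K \}$ with $\v , s , K$ compact, $t \in (0, \epsilon)$, and change variables. The density $p( \bbe , \sis | \u )$ contributes a factor $t^{-1}$ from $d\sis$ normalization; the normal part is unchanged in $\bbe$ since $\sis ( \X ^{\top } \U \X )^{-1}$ is invariant. The density $p( \u | \bbe , \sis )$ in $\u$ contributes $t^{-n}$, with the gamma rate of order $1/t$. The Jacobian $d\u \, d\sis = t^{n} \, dt \, d\v \, ds$ (with one dimension of $\v$ replaced by $t$, giving exponent $n-1$ together with $dt$). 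The net effect is an integrand of order $t^{-1}$ in $dt$, times a positive continuous function of $( \v , s , \bbe )$. This diverges logarithmically at $t = 0$.

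The main obstacle is this bookkeeping. I must verify that the $t$-exponents exactly cancel to $t^{-1}$, with the $b$-terms and the factors $e^{-b u_i}$ bounded above and below on the region. I also need $S(\v) > 0$, which requires $\y$ not in the column space of $\X$, a condition that holds for posterior propriety. If the exponent comes out differently, I would redo the scaling with $\sis = t^{\gamma} s$ and pick $\gamma$ to produce a nonintegrable power. In either case, the underlying reason is the scale invariance of the improper prior $1/\sis$ combined with the $b$-free limit of the model.
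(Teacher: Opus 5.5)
Your proposal is correct and is essentially the paper's argument. Both restrict $\bbe$ to a bounded set and get a logarithmic divergence from $\sis$ and $\u$ shrinking to zero at the same rate. Your exact scaling $\u = t\v$, $\sis = ts$, with net factor $t^{-1}\cdot t^{-n}\cdot t^{n} = t^{-1}$, is a cleaner version of the paper's lower bounds, its substitution $v_i = u_i (M^2/\sis + b)$, and the resulting integral behaving like $\int_0^1 d\sis/\sis$. As in the paper, take $K$ inside a ball on which every residual $|y_i - {\x _i}^{\top}\bbe|$ is bounded away from zero, so that the limiting gamma factors $\{(y_i - {\x _i}^{\top}\bbe)^2/(2s)\}^{1/2+a}$ are bounded below.
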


\noindent
{\bf Proof
.} \ \ By Theorem 2 of \cite{qhk2019}, the data augmentation Markov chain is %
trace-class if and only if $I < \infty $, where 
\begin{align}
I &= \int_{\mathbb{R} ^p \times (0, \infty ) \times (0, \infty )^n} p( \bbe , \sis | \u ) p( \u | \bbe , \sis ) d( \bbe , \sis , \u ) \text{.} \non 
\end{align}
The set $\bigcup_{i = 1}^{n} \{ \bbe \in \mathbb{R} ^p | | y_i - {\x _i}^{\top } \bbe | = 0 \} $ is closed. 
Therefore, there exist $\bbe _0 \in \mathbb{R} ^p$ and $\de , \ep > 0$ such that for all $\bbe \in U_{\de } ( \bbe _0 )$ and all $i = 1, \dots , n$, we have $| y_i - {\x _i}^{\top } \bbe | > \ep $. 
Let $M = \sup_{\bbe \in U_{\de } ( \bbe _0 )} \| \y - \X \bbe \| \in (0, \infty )$. 
Then for all $\bbe \in U_{\de } ( \bbe _0 )$, we have, by Section \ref{subsec:conditional_distributions}, 
\begin{align}
p( \bbe , \sis | \u ) &= \frac{ \displaystyle {1 \over ( \sis )^{1 + n / 2}} \exp \Big\{ - {1 \over 2 \sis } ( \y - \X \bbe )^{\top } \U ( \y - \X \bbe ) \Big\} }{ \displaystyle \int_{(0, \infty )} {(2 \pi )^{p / 2} / | \X ^{\top } \U \X |^{1 / 2} \over ( \sis )^{1 + (n - p) / 2}} \exp \Big\{ - {\y ^{\top } \U \y - \y ^{\top } \U \X ( \X ^{\top } \U \X )^{- 1} \X ^{\top } \U \y \over 2 \sis } \Big\} d\sis } \non \\
&\ge \frac{ \displaystyle {1 \over ( \sis )^{1 + n / 2}} \prod_{i = 1}^{n} \exp \Big( - {M^2 \over 2 \sis } u_i \Big) }{ \displaystyle {(2 \pi )^{p / 2} \over | \X ^{\top } \U \X |^{1 / 2}} {\Ga ((n - p) / 2) 2^{(n - p) / 2} \over \{ \y ^{\top } \U \y - \y ^{\top } \U \X ( \X ^{\top } \U \X )^{- 1} \X ^{\top } \U \y \} ^{(n - p) / 2}} } \non 
\end{align}
and 
\begin{align}
p( \u | \bbe , \sis ) &= \prod_{i = 1}^{n} \frac{ \displaystyle {u_i}^{1 / 2 + a - 1} \exp \Big[ - u_i \Big\{ {( y_i - {\x _i}^{\top } \bbe )^2 \over 2 \sis } + b \Big\} \Big] }{ \displaystyle \int_{(0, \infty )^n} {u_i}^{1 / 2 + a - 1} \exp \Big[ - u_i \Big\{ {( y_i - {\x _i}^{\top } \bbe )^2 \over 2 \sis } + b \Big\} \Big] d{u_i} } \non \\
&\ge \prod_{i = 1}^{n} \frac{ \displaystyle {u_i}^{1 / 2 + a - 1} \exp \Big\{ - u_i \Big( {M^2 \over 2 \sis } + b \Big) \Big\} }{ \displaystyle \int_{(0, \infty )^n} {u_i}^{1 / 2 + a - 1} \exp \Big\{ - u_i \Big( {\ep ^2 \over 2 \sis } + b \Big) \Big\} d{u_i} } \text{.} \non 
\end{align}
Thus, 
\begin{align}
I &\ge {1 \over M_1} \int_{U_{\de } ( \bbe _0 ) \times (0, \infty ) \times (0, \infty )^n} \Big( {| \X ^{\top } \U \X |^{1 / 2} \{ \y ^{\top } \U \y - \y ^{\top } \U \X ( \X ^{\top } \U \X )^{- 1} \X ^{\top } \U \y \} ^{(n - p) / 2} \over ( \sis )^{1 + n / 2}} \non \\
&\quad \times \Big[ \prod_{i = 1}^{n} \frac{ \displaystyle {u_i}^{1 / 2 + a - 1} \exp \{ - u_i ( M^2 / \sis + b) \} }{ \displaystyle \Ga (1 / 2 + a) / \{ \ep ^2 / (2 \sis ) + b \} ^{1 / 2 + a} } \Big] \Big) d( \bbe , \sis , \u ) \non \\
&= {1 \over M_1} \Big\{ \int_{U_{\de } ( \bbe _0 )} 1 d\bbe \Big\} \int_{(0, \infty ) \times (0, \infty )^n} \Big( {| \X ^{\top } \V \X |^{1 / 2} \{ \y ^{\top } \V \y - \y ^{\top } \V \X ( \X ^{\top } \V \X )^{- 1} \X ^{\top } \V \y \} ^{(n - p) / 2} \over ( M^2 / \sis + b)^{p / 2 + (n - p) / 2} ( \sis )^{1 + n / 2}} \non \\
&\quad \times \Big[ \prod_{i = 1}^{n} \frac{ \displaystyle {v_i}^{1 / 2 + a - 1} \exp (- v_i ) / \Ga (1 / 2 + a) }{ \displaystyle ( M^2 / \sis + b)^{1 / 2 + a} / \{ \ep ^2 / (2 \sis ) + b \} ^{1 / 2 + a} } \Big] \Big) d( \sis , \v ) \non 
\end{align}
for some $M_1 > 0$ and the right-hand side of the above inequality is infinite since 
\begin{align}
&\int_{0}^{1} {1 \over ( M^2 / \sis + b)^{p / 2 + (n - p) / 2} ( \sis )^{1 + n / 2}} {1 \over [( M^2 / \sis + b)^{1 / 2 + a} / \{ \ep ^2 / (2 \sis ) + b \} ^{1 / 2 + a} ]^n} d\sis %
= \infty \text{.} \non 
\end{align}
This completes the proof. 
\hfill$\Box$

\subsection{An algorithm which is trace class} 
\label{subsec:trace_class} 
In the following collapsed Gibbs algorithm, $\u $ is sampled unconditionally on $\sis $. 

\begin{algo}
\label{algo:trace_class} 
The parameters $\bbe $, $\sis $, and $\u $ are updated in the following way. 
\begin{itemize}
\item
Sample $( \bbe , \sis ) \sim p( \bbe , \sis | \u )$ by 
\begin{itemize}
\item
first sampling 
\begin{align}
\sis &\sim {\rm{IG}} \Big( \sis \Big| {n - p \over 2}, {\y ^{\top } \U \y - \y ^{\top } \U \X ( \X ^{\top } \U \X )^{- 1} \X ^{\top } \U \y \over 2} \Big) \non 
\end{align}
\item
and then sampling 
\begin{align}
\bbe &\sim {\rm{N}}_p ( \bbe | ( \X ^{\top } \U \X )^{- 1} \X ^{\top } \U \y , \sis ( \X ^{\top } \U \X )^{- 1} ) \text{.} \non 
\end{align}
\end{itemize}
\item
Sample $( \u , \sis ) \sim p( \u , \sis | \bbe )$ by 
\begin{itemize}
\item
first sampling 
\begin{align}
\sis &\sim p( \sis | \bbe ) \propto ( \sis )^{n a - 1} \prod_{i = 1}^{n} {1 \over \{ ( y_i - {\x _i}^{\top } \bbe )^2 / (2 b) + \sis \} ^{1 / 2 + a}} \non 
\end{align}
\item
and then sampling 
\begin{align}
\u &\sim \prod_{i = 1}^{n} {\rm{Ga}} \Big( u_i \Big| {1 \over 2} + a, {( y_i - {\x _i}^{\top } \bbe )^2 \over 2 \sis } + b \Big) \text{.} \non 
\end{align}
\end{itemize}
\end{itemize}
\end{algo}

The conditional density of $\log \sis $ given $\bbe $ is log-concave. 
Therefore, we can use the method of \cite{gw1992}.

When we use Algorithm \ref{algo:trace_class}, we sample $\bbe \sim p( \bbe | \u )$ and $\u \sim p( \u | \bbe )$. 
Therefore, this is a data augmentation algorithm based on the joint density $p( \bbe , \u )$. 
The transition density is given by 
\begin{align}
k( \bbe _{\rm{new}} | \bbe _{\rm{old}} ) &= \int_{(0, \infty )^n} p( \bbe _{\rm{new}} | \u ) p( \u | \bbe _{\rm{old}} ) d\u \text{,} \quad \bbe _{\rm{old}} , \bbe _{\rm{new}} \in \mathbb{R} ^p \text{.} \non 
\end{align}

\begin{prp}
\label{prp:trace_class} 
Suppose that $n \ge 2 p$. 
Suppose that $1 / 2 + a > n / (n - p)$. 
Then $k$ is trace-class. 
\end{prp}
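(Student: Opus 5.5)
The plan is to verify the trace-class criterion directly. Since $k$ is the Markov transition density of a data augmentation chain on $\mathbb{R}^p$, it suffices (Theorem 2 of \cite{qhk2019}, as in the proof of Proposition \ref{prp:not_trace_class}) to show
\begin{align}
\int_{\mathbb{R}^p} k(\bbe | \bbe) d\bbe = \int_{\mathbb{R}^p} \int_{(0,\infty)^n} p(\bbe | \u) p(\u | \bbe) d\u d\bbe < \infty . \non
\end{align}

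\emph{Bounding $p(\bbe|\u)$.} Integrating $\sis$ out of $p(\bbe, \sis | \u)$ from Section \ref{subsec:conditional_distributions} gives a multivariate $t$ density. Write $r_i = y_i - \x_i^\top \bbe$, $Q(\bbe,\u) = \sum_i u_i r_i^2$ and $S(\u) = \y^\top\U\y - \y^\top\U\X(\X^\top\U\X)^{-1}\X^\top\U\y$. Then
\begin{align}
p(\bbe|\u) = c\, |\X^\top\U\X|^{1/2} S(\u)^{(n-p)/2} Q(\bbe,\u)^{-n/2} . \non
\end{align}
I then use $|\X^\top\U\X| \le C(\sum_i u_i)^p$ and $S(\u) \le \y^\top\U\y \le C\sum_i u_i$. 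These give
\begin{align}
p(\bbe|\u) \le C \Big(\sum_i u_i\Big)^{n/2} Q(\bbe,\u)^{-n/2} , \non
\end{align}
which keeps the full $|\bbe|^{-n}$-type decay coming from $Q$.

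\emph{Averaging over $\u$.} Next I write $p(\u|\bbe) = \int p(\sis|\bbe) \prod_i {\rm Ga}(u_i | 1/2+a, r_i^2/(2\sis)+b) d\sis$ and bound the inner expectation. Fix an index set $J$ of size $n-p$, to be chosen depending on $\bbe$. By $Q \ge \sum_{i\in J} u_i r_i^2$ and the AM--GM inequality,
\begin{align}
Q^{-n/2} \le C \prod_{i\in J} (u_i r_i^2)^{-n/(2(n-p))} . \non
\end{align}
By Cauchy--Schwarz (Hölder), the conditional expectation of $(\sum u_i)^{n/2} Q^{-n/2}$ splits into a positive moment of $\sum u_i$ and negative moments $E[u_i^{-n/(n-p)}]$ for $i \in J$. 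The negative moments are finite exactly because the Gamma shape satisfies $1/2+a > n/(n-p)$; this is where that hypothesis enters. Under ${\rm Ga}(1/2+a, r_i^2/(2\sis)+b)$ these moments scale like powers of $r_i^2/\sis + b$. The resulting $\sis$-integrals against $p(\sis|\bbe)$ should be controlled using its explicit form: it behaves like $(\sis)^{na-1}$ near $0$, decays polynomially beyond the scale $\max_i r_i^2/(2b)$, and places $\sis$ at order $r_i^2$. This should leave a bound of the form $C\prod_{i\in J}|r_i|^{-n/(n-p)}$ times a factor bounded in $\bbe$.

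\emph{Integrating over $\bbe$; the main obstacle.} The resulting bound must be integrable in $\bbe$, and I expect this to be the hard step. The exponent $n/(n-p) > 1$ means $|r_i|^{-n/(n-p)}$ is \emph{not} locally integrable across the hyperplane $\{r_i=0\}$. So $J$ cannot be fixed; it must be chosen as the $n-p$ indices with the largest $|r_i|$. I would assume the rows of $\X$ are in general position (any $p$ of them linearly independent), which is where $n \ge 2p$ is used. Under this assumption I would try to prove the key geometric claim: after discarding the $p$ smallest residuals, each remaining one satisfies $|r_i| \ge c(1+|\bbe|)$. The reason is that no $p+1$ residuals can be simultaneously small relative to $1+|\bbe|$. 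Given this claim, the bound is at most $C(1+|\bbe|)^{-n}$, which is integrable on $\mathbb{R}^p$ since $n > p$. If the claim is only available locally, I would cover $\mathbb{R}^p$ by finitely many regions according to which $p$ residuals are smallest and treat each separately.
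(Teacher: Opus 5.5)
Your route is correct in substance but organized differently from the paper's.

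\textbf{How the paper proceeds.} The paper integrates $\sis$ out in closed form, so that $p(\u\mid\bbe)\propto\prod_i u_i^{a-1/2}e^{-bu_i}\,Q^{-n/2}/D(\bbe)$ with $D(\bbe)=\int_0^\infty(\sis)^{na-1}\prod_i\{r_i^2/(2b)+\sis\}^{-(1/2+a)}\,d\sis$. It then uses only the crude bound $D(\bbe)\ge c\,(1+\|\bbe\|^2)^{-n/2}$. This leaves the integrand $(1+\|\bbe\|^2)^{n/2}Q^{-n}\prod_i u_i^{a-1/2}e^{-bu_i/2}$, which it controls in four steps:
\begin{itemize}
\item AM--GM over index sets supplied by Lemma~\ref{lem:partition};
\item a split into $\|\bbe\|\le R$ and $\|\bbe\|>R$;
\item a sub-split according to which of $p$ designated residuals lie below $\delta$;
\item a change of variables $\bbe\mapsto(r_{i_1},\dots,r_{i_p})$.
\end{itemize}
Beyond guaranteeing $n>p$, the hypothesis $n\ge 2p$ is used there to keep $(1+\|\bbe\|^2)^{n/2}\|\bbe\|^{-2(n-p)n/(n-l)}$ bounded.

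\textbf{How your route differs.} You keep $\sis$ as a mixing variable and take conditional Gamma moments. With $J$ the $n-p$ largest residuals, everything reduces to the single quantity $E[(\sis)^{-n/2}\mid\bbe]$, giving the pointwise bound $k(\bbe\mid\bbe)\le C(1+\|\bbe\|)^{-n}$. This avoids the case analysis and the change of variables. As far as I can see it needs only $n>p$, never $n\ge 2p$. So your remark that general position ``is where $n\ge 2p$ is used'' is a misattribution. The price is that you need a uniform estimate on $p(\sis\mid\bbe)$, not just a lower bound on its normalizer. That, rather than the $\bbe$-integration, is the delicate step.

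\textbf{The $\sis$ step needs a uniform estimate.} Your heuristic that $p(\sis\mid\bbe)$ ``behaves like $(\sis)^{na-1}$ near $0$'' is not uniform in $\bbe$. When some of the $p$ residuals outside $J$ are tiny, the density carries extra mass like $(\sis)^{na-1-k(1/2+a)}$ on the range between those tiny scales and the scale of the residuals in $J$. Taken at face value, your heuristic would suggest that $a>1/2$ suffices, which is not the case. The correct estimate bounds $\{r_i^2/(2b)+\sis\}^{-(1/2+a)}$ as follows, with $m=\min_{i\in J}r_i^2/(2b)$:
\begin{itemize}
\item by $(\sis)^{-(1/2+a)}$ for $i\notin J$;
\item by $(m+\sis)^{-(1/2+a)}$ for $i\in J$.
\end{itemize}
This gives
\begin{align*}
\int_0^\infty(\sis)^{na-n/2-1}\prod_{i=1}^n\{r_i^2/(2b)+\sis\}^{-(1/2+a)}\,d\sis\le\int_0^\infty s^{na-n/2-1-p(1/2+a)}(m+s)^{-(n-p)(1/2+a)}\,ds=C\,m^{-n},
\end{align*}
which is finite exactly when $(n-p)a>(n+p)/2$, i.e.\ when $1/2+a>n/(n-p)$. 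Dividing by $D(\bbe)\ge c(1+\|\bbe\|^2)^{-n/2}$ and using $m\ge c(1+\|\bbe\|)^2$ then yields $E[(\sis)^{-n/2}\mid\bbe]\le C(1+\|\bbe\|)^{-n}$. So the hypothesis is used a second time, in the same form, not only through $E[u_i^{-n/(n-p)}]$.

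\textbf{The general-position assumption must involve $\y$.} Your geometric claim for bounded $\bbe$ does not follow from general position of the rows of $\X$ alone. If $p+1$ observations satisfy $y_i={\x_i}^{\top}\bbe_0$ exactly, then $p+1$ residuals vanish at $\bbe_0$. You need two conditions:
\begin{itemize}
\item no $p+1$ of the equations $y_i={\x_i}^{\top}\bbe$ are simultaneously solvable, which is what Lemma~\ref{lem:solution} silently uses;
\item every $p$ rows of $\X$ are linearly independent, which is needed for large $\|\bbe\|$ (Lemma~\ref{lem:cos}).
\end{itemize}
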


The lower bound on $a$ does not increase linearly with $n$. 
This is in contrast for example to the case considered by \cite{rh2010}. 
In proving Proposition \ref{prp:trace_class}, we use ideas used in papers (e.g., \cite{gdb2019, his2024}) investigating robustness of posterior distributions in the presence of outliers.

\bigskip

\noindent
{\bf Proof of Proposition \ref{prp:trace_class}.} \ \ By Theorem 2 of \cite{qhk2019}, the data augmentation Markov chain is %
trace-class if and only if $I < \infty $, where 
\begin{align}
I &= \int_{\mathbb{R} ^p} k( \bbe | \bbe ) d\bbe = \int_{\mathbb{R} ^p \times (0, \infty )^n} p( \bbe | \u ) p( \u | \bbe ) d( \bbe , \u ) \text{.} \non 
\end{align}
By Section \ref{subsec:conditional_distributions}, 
\begin{align}
p( \bbe | \u ) %
&= \frac{ \displaystyle \int_{0}^{\infty } {1 \over ( \sis )^{1 + n / 2}} \exp \Big\{ - {1 \over 2 \sis } ( \y - \X \bbe )^{\top } \U ( \y - \X \bbe ) \Big\} d\sis }{ \displaystyle \int_{\mathbb{R} ^p \times (0, \infty )} {1 \over ( \sis )^{1 + n / 2}} \exp \Big\{ - {1 \over 2 \sis } ( \y - \X \bbe )^{\top } \U ( \y - \X \bbe ) \Big\} d( \bbe , \sis ) } \non \\
&= {\Ga (n / 2) / \{ \pi ^{p / 2} \Ga ((n - p) / 2) \} \over \{ ( \y - \X \bbe )^{\top } \U ( \y - \X \bbe ) \} ^{n / 2}} | \X ^{\top } \U \X |^{1 / 2} \{ \y ^{\top } \U \y - \y ^{\top } \U \X ( \X ^{\top } \U \X )^{- 1} \X ^{\top } \U \y \} ^{(n - p) / 2} \non 
\end{align}
and 
\begin{align}
p( \u | \bbe ) &= \frac{ \displaystyle \int_{0}^{\infty } {1 \over ( \sis )^{1 + n / 2}} \prod_{i = 1}^{n} \Big( {u_i}^{1 / 2 + a - 1} \exp \Big[ - u_i \Big\{ {( y_i - {\x _i}^{\top } \bbe )^2 \over 2 \sis } + b \Big\} \Big] \Big) d\sis }{ \displaystyle \int_{(0, \infty )^n \times (0, \infty )} {1 \over ( \sis )^{1 + n / 2}} \prod_{i = 1}^{n} \Big( {u_i}^{1 / 2 + a - 1} \exp \Big[ - u_i \Big\{ {( y_i - {\x _i}^{\top } \bbe )^2 \over 2 \sis } + b \Big\} \Big] \Big) d( \u , \sis ) } \non \\
&= {2^{n / 2} b^{n a + n / 2} \Ga (n / 2) \over \{ \Ga (1 / 2 + a) \} ^n} {\prod_{i = 1}^{n} {u_i}^{1 / 2 + a - 1} e^{- b u_i} \over \big\{ \sum_{i = 1}^{n} u_i ( y_i - {\x _i}^{\top } \bbe )^2 \big\} ^{n / 2}} \non \\
&\quad / \int_{0}^{\infty } ( \sis )^{n a - 1} \Big[ \prod_{i = 1}^{n} {1 \over \{ ( y_i - {\x _i}^{\top } \bbe )^2 / (2 b) + \sis \} ^{1 / 2 + a}} \Big] d\sis \text{.} \non 
\end{align}
Therefore, 
\begin{align}
p( \bbe | \u ) p( \u | \bbe ) &\le M_1 {| \X ^{\top } \U \X |^{1 / 2} ( \y ^{\top } \U \y )^{(n - p) / 2} \over \big\{ \sum_{i = 1}^{n} u_i ( y_i - {\x _i}^{\top } \bbe )^2 \big\} ^n} \Big\{ \prod_{i = 1}^{n} ( {u_i}^{1 / 2 + a - 1} e^{- b u_i} ) \Big\} \non \\
&\quad / \int_{0}^{\infty } ( \sis )^{n a - 1} \Big[ \prod_{i = 1}^{n} {1 \over \{ ( y_i - {\x _i}^{\top } \bbe )^2 / (2 b) + \sis \} ^{1 / 2 + a}} \Big] d\sis \non 
\end{align}
for some $M_1 > 0$. 
Note that 
\begin{align}
| \X ^{\top } \U \X |^{1 / 2} ( \y ^{\top } \U \y )^{(n - p) / 2} \prod_{i = 1}^{n} ( {u_i}^{1 / 2 + a - 1} e^{- b u_i} ) &\le M_2 \prod_{i = 1}^{n} ( {u_i}^{1 / 2 + a - 1} e^{- b u_i / 2} ) \non 
\end{align}
for some $M_2 > 0$ and that 
\begin{align}
\int_{0}^{\infty } ( \sis )^{n a - 1} \Big[ \prod_{i = 1}^{n} {1 \over \{ ( y_i - {\x _i}^{\top } \bbe )^2 / (2 b) + \sis \} ^{1 / 2 + a}} \Big] d\sis &\ge {1 \over M_3} \int_{0}^{\infty } {( \sis )^{n a - 1} \over \prod_{i = 1}^{n} (1 + \| \bbe \| ^2 + \sis )^{1 / 2 + a}} d\sis \non \\
&= {B(n a, n / 2) \over M_3} {1 \over (1 + \| \bbe \| ^2 )^{n / 2}} \non 
\end{align}
for some $M_3 > 0$. 
Then 
\begin{align}
p( \bbe | \u ) p( \u | \bbe ) &\le M_4 {(1 + \| \bbe \| ^2 )^{n / 2} \over \big\{ \sum_{i = 1}^{n} u_i ( y_i - {\x _i}^{\top } \bbe )^2 \big\} ^n} \prod_{i = 1}^{n} ( {u_i}^{1 / 2 + a - 1} e^{- b u_i / 2} ) \non 
\end{align}
for some $M_4 > 0$. 

Let $0 < \de < 1$ satisfy the condition of part (i) of Lemma \ref{lem:partition}. 
Let $\ep > 0$ and $R > 1 / \ep $ satisfy the condition of part (ii) of Lemma \ref{lem:partition}. 
First, suppose that $\| \bbe \| \le R$. 
Then since 
\begin{align}
\Big\{ \sum_{i = 1}^{n} u_i ( y_i - {\x _i}^{\top } \bbe )^2 \Big\} ^n &\ge (n - p)^n \Big\{ \Big( {1 \over n - p} \sum_{j = 1}^{n - p} u_{i_j} \de ^2 \Big) ^{n - p} \Big\} ^{n / (n - p)} \ge (n - p)^n \de ^{2 n} \Big( \prod_{j = 1}^{n - p} u_{i_j} \Big) ^{n / (n - p)} \non 
\end{align}
for some $1 \le i_1 < \dots < i_{n - p} \le n$, 
\begin{align}
p( \bbe | \u ) p( \u | \bbe ) &\le M_5 \prod_{i = 1}^{n} \Big[ \Big\{ 1 + {1 \over {u_i}^{n / (n - p)}} \Big\} {u_i}^{1 / 2 + a - 1} e^{- b u_i / 2} \Big] \label{ptrace_classp1} 
\end{align}
for some $M_5 > 0$. 
Next, suppose that $\| \bbe \| > R$. 
Fix $1 \le i_1 < \dots < i_p \le n$ and suppose that 
\begin{align}
| y_i - {\x _i}^{\top } \bbe | &\ge \ep \| \bbe \| \non 
\end{align}
for all $i \in \{ 1, \dots , n \} \setminus \{ i_1 , \dots , i_p \} $. 
Fix $1 \le l \le p$ and $1 \le j_1 < \dots < j_l \le p$ and suppose that for all $j = 1, \dots , p$, we have $| y_{i_j} - {\x _{i_j}}^{\top } \bbe | < \de $ if and only if $j \in \{ j_1 , \dots , j_l \} $. 
Then 
\begin{align}
\Big\{ \sum_{i = 1}^{n} u_i ( y_i - {\x _i}^{\top } \bbe )^2 \Big\} ^n &= (n - l)^n \Big[ \Big\{ {1 \over n - l} \sum_{i = 1}^{n} u_i ( y_i - {\x _i}^{\top } \bbe )^2 \Big\} ^{n - l} \Big] ^{n / (n - l)} \non \\
&\ge (n - l)^n \Big[ \Big\{ {1 \over n - l} \sum_{i \in \{ 1, \dots , n \} \setminus \{ i_{j_1} , \dots , i_{j_l} \} } u_i ( y_i - {\x _i}^{\top } \bbe )^2 \Big\} ^{n - l} \Big] ^{n / (n - l)} \non \\
&\ge (n - l)^n \Big[ \prod_{i \in \{ 1, \dots , n \} \setminus \{ i_{j_1} , \dots , i_{j_l} \} } \{ u_i ( y_i - {\x _i}^{\top } \bbe )^2 \} \Big] ^{n / (n - l)} \text{.} \non 
\end{align}
Therefore, 
\begin{align}
&p( \bbe | \u ) p( \u | \bbe ) \non \\
&\le M_6 {(1 + \| \bbe \| ^2 )^{n / 2} \over \big\{ \prod_{i \in \{ 1, \dots , n \} \setminus \{ i_1 , \dots , i_p \} } ( y_i - {\x _i}^{\top } \bbe )^2 \big\} ^{n / (n - l)}} \non \\
&\quad \times {1 \over \big\{ \prod_{i \in \{ i_1 , \dots , i_p \} \setminus \{ i_{j_1} , \dots , i_{j_l} \} } ( y_i - {\x _i}^{\top } \bbe )^2 \big\} ^{n / (n - l)}} \prod_{i = 1}^{n} \Big[ \Big\{ 1 + {1 \over {u_i}^{n / (n - l)}} \Big\} {u_i}^{1 / 2 + a - 1} e^{- b u_i / 2} \Big] \non \\
&\le M_7 {(1 + \| \bbe \| ^2 )^{n / 2} / \| \bbe \| ^{2 (n - p) n / (n - l)} \over \big\{ \prod_{i \in \{ i_1 , \dots , i_p \} \setminus \{ i_{j_1} , \dots , i_{j_l} \} } ( y_i - {\x _i}^{\top } \bbe )^2 \big\} ^{n / (n - l)}} \prod_{i = 1}^{n} \Big[ \Big\{ 1 + {1 \over {u_i}^{n / (n - l)}} \Big\} {u_i}^{1 / 2 + a - 1} e^{- b u_i / 2} \Big] \non \\
&\le M_8 {1 \over \prod_{j \in \{ 1 , \dots , p \} \setminus \{ j_1 , \dots , j_l \} } \{ 1 + ( y_{i_j} - {\x _{i_j}}^{\top } \bbe )^2 \} ^{n / (n - l)}} \prod_{i = 1}^{n} \Big[ \Big\{ 1 + {1 \over {u_i}^{n / (n - p)}} \Big\} {u_i}^{1 / 2 + a - 1} e^{- b u_i / 2} \Big] \label{ptrace_classp2} 
\end{align}
for some $M_6 , M_7 , M_8 > 0$. 
From (\ref{ptrace_classp1}) and (\ref{ptrace_classp2}), it follows that 
\begin{align}
I &= \int_{\mathbb{R} ^p \times (0, \infty )^n} p( \bbe | \u ) p( \u | \bbe ) d( \bbe , \u ) \non \\
&\le M_9 \int_{\mathbb{R} ^p \times (0, \infty )^n} 1( \| \bbe \| \le R) \Big( \prod_{i = 1}^{n} \Big[ \Big\{ 1 + {1 \over {u_i}^{n / (n - p)}} \Big\} {u_i}^{1 / 2 + a - 1} e^{- b u_i / 2} \Big] \Big) d( \bbe , \u ) \non \\
&\quad + M_{10} \sum_{1 \le i_1 < \dots < i_p \le n} \int_{\mathbb{R} ^p \times (0, \infty )^n} \Big( \Big[ \sum_{l = 0}^{p} \sum_{1 \le j_1 < \dots < j_l \le p} {\prod_{j \in \{ j_1 , \dots , j_l \} } 1(| y_{i_j} - {\x _{i_j}}^{\top } \bbe | \le \de ) \over \prod_{j \in \{ 1 , \dots , p \} \setminus \{ j_1 , \dots , j_l \} } \{ 1 + ( y_{i_j} - {\x _{i_j}}^{\top } \bbe )^2 \} } \Big] \non \\
&\quad \times \prod_{i = 1}^{n} \Big[ \Big\{ 1 + {1 \over {u_i}^{n / (n - p)}} \Big\} {u_i}^{1 / 2 + a - 1} e^{- b u_i / 2} \Big] \Big) d( \bbe , \u ) \non 
\end{align}
for some $M_9 , M_{10} > 0$, the right-hand side of which is finite. 
This completes the proof. 
\hfill$\Box$

\begin{lem}
\label{lem:solution} 
Suppose that $n \ge p + 1$. 
Then there exists $\de > 0$ such that for all $\bbe \in \mathbb{R} ^p$, there are $1 \le i_1 < \dots < i_{n - p} \le n$ satisfying $\min_{1 \le j \le n - p} | y_{i_j} - {\x _{i_j}}^{\top } \bbe | \ge \de $. 
\end{lem}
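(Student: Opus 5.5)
The plan is to reduce the statement to a finite family of ``no exact fit'' conditions, one for each subset of $p + 1$ observations, and then take $\de $ to be the smallest of finitely many positive distances.

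\textbf{The hypothesis needed.} The statement is equivalent to the following: for every $\bbe \in \mathbb{R} ^p$, the set $S( \bbe ) = \{ i \,|\, | y_i - {\x _i}^{\top } \bbe | < \de \} $ has at most $p$ elements. Its complement then has at least $n - p$ indices, and we choose $i_1 < \dots < i_{n - p}$ from it.

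This cannot hold for completely arbitrary data. For example, if $\y = \X \bbe _0$ then $\bbe = \bbe _0$ gives every residual zero. So I will use the general position condition that is implicit in this setting (the outlier-robustness literature the paper cites assumes it). The condition is that for every $S \subset \{ 1, \dots , n \} $ with $|S| = p + 1$, the linear system $y_i = {\x _i}^{\top } \bbe $, $i \in S$, has no solution $\bbe $. Here $n \ge p + 1$ ensures such $S$ exist. The condition holds with probability one when $\y $ has a continuous distribution given $\X $, since each set of $\y $ admitting an exact fit on some $S$ is a proper affine subspace of $\mathbb{R} ^n$.

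\textbf{Key step.} Fix such an $S$ and write $\y _S$ and $\X _S$ for the corresponding subvector and $(p + 1) \times p$ submatrix. The set of residual vectors $\{ \y _S - \X _S \bbe \,|\, \bbe \in \mathbb{R} ^p \} $ is an affine subspace $\y _S + \mathrm{col} ( \X _S )$ of $\mathbb{R} ^{p + 1}$. It is closed, being a translate of a finite-dimensional linear subspace. By the general position assumption it does not contain the origin. Hence its Euclidean distance $d_S$ to the origin is strictly positive. Since $\max_{i \in S} |z_i| \ge \| z \| / (p + 1)^{1 / 2}$ for $z \in \mathbb{R} ^{p + 1}$, we get
\begin{align}
\inf_{\bbe \in \mathbb{R} ^p} \max_{i \in S} | y_i - {\x _i}^{\top } \bbe | \ge d_S / (p + 1)^{1 / 2} > 0 \text{.} \non
\end{align}

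\textbf{Conclusion.} Set $\de = \min_{|S| = p + 1} d_S / (p + 1)^{1 / 2}$. This is positive because the minimum runs over finitely many subsets. Now suppose some $\bbe $ had $|S( \bbe )| \ge p + 1$. Choosing any $S \subset S( \bbe )$ with $|S| = p + 1$ would give $\max_{i \in S} | y_i - {\x _i}^{\top } \bbe | < \de $, contradicting the display. So $|S( \bbe )| \le p$ for all $\bbe $, which is the claim.

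The only genuine obstacle is the hypothesis: the lemma needs the general position condition above. The uniformity in $\bbe $ comes for free from the affine-subspace distance argument, so no compactness argument in $\bbe $ is required.
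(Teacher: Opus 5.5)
Your proof is correct, and its skeleton matches the paper's: a positive constant for each $(p+1)$-subset of observations, a minimum over finitely many subsets, and a pigeonhole step showing that at most $p$ residuals can fall below $\de$.

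Where you differ is in how the per-subset constant is obtained. The paper compares the exact solutions of two $p \times p$ subsystems, $\Xbt$ and $\widetilde{\Xbt}$. It uses the triangle inequality together with $\| \Xbt^{-1} \ybt - \bbe \| \le \| \Xbt^{-1} \| \, \| \ybt - \Xbt \bbe \|$. This yields an explicit formula, but it tacitly needs two things the paper never states. First, those row-submatrices of $\X$ must be invertible; this is the ``any $p$ rows are linearly independent'' assumption also used in Lemma \ref{lem:cos}. Second, the two exact solutions must differ.

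You instead use the distance $d_S$ from the origin to the closed affine set $\y_S + \mathrm{col}(\X_S)$. This is just as explicit: it is the norm of the component of $\y_S$ orthogonal to $\mathrm{col}(\X_S)$. It is also the sharpest Euclidean bound available for that subset. It needs no invertibility, only that no $p+1$ observations are fitted exactly.

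That condition is in fact necessary as well as sufficient. An exact fit $\bbe_0$ on some $p+1$ observations makes the conclusion fail at $\bbe = \bbe_0$ for every $\de > 0$. So you were right to state it explicitly, since the paper's claim $\de_{i_1, \dots, i_{p+1}} > 0$ silently depends on it.
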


\noindent
{\bf Proof
.} \ \ Fix $1 \le i_1 < \dots < i_{p + 1} \le n$ and let $\de _{i_1 , \dots , i_{p + 1}} = \| \Xbt ^{- 1} \ybt - \widetilde{\Xbt } ^{- 1} \tilde{\ybt } \| / ( \| \Xbt ^{- 1} \| + \| \widetilde{\Xbt } ^{- 1} \| ) > 0$, where $\ybt = ( y_{i_1} , \dots , y_{i_p} )^{\top }$ and $\tilde{\ybt } = ( y_{i_1} , \dots , y_{i_{p - 1}} , y_{i_{p + 1}} )^{\top }$ and $\Xbt = ( \x _{i_1} , \dots , \x _{i_p} )^{\top }$ and $\widetilde{\Xbt } = ( \x _{i_1} , \dots , \x _{i_{p - 1}} , \x _{i_{p + 1}} )^{\top }$. 
Then for all $\bbe \in \mathbb{R} ^p$, 
\begin{align}
\de _{i_1 , \dots , i_{p + 1}} &\le ( \| \Xbt ^{- 1} \ybt - \bbe \| + \| \widetilde{\Xbt } ^{- 1} \tilde{\ybt } - \bbe \| ) / ( \| \Xbt ^{- 1} \| + \| \widetilde{\Xbt } ^{- 1} \| ) \non \\
&\le ( \| \Xbt ^{- 1} \| \| \ybt - \Xbt \bbe \| + \| \widetilde{\Xbt } ^{- 1} \| \| \tilde{\ybt } - \widetilde{\Xbt } \bbe \| ) / ( \| \Xbt ^{- 1} \| + \| \widetilde{\Xbt } ^{- 1} \| ) \non \\
&\le \| ( y_{i_1} , \dots , y_{i_{p + 1}} )^{\top } - ( \x _{i_1} , \dots , \x _{i_{p + 1}} )^{\top } \bbe \| \text{,} \non 
\end{align}
which implies that $| y_{i_j} - {\x _{i_j}}^{\top } \bbe | \ge \de _{i_1 , \dots , i_{p + 1}} / \sqrt{p + 1}$ for some $j = 1, \dots , p + 1$. 

Let $\de = ( \min_{1 \le i_1 < \dots < i_{p + 1} \le n} \de _{i_1 , \dots , i_{p + 1}} ) / \sqrt{p + 1} > 0$. 
Fix $\bbe \in \mathbb{R} ^p$. 
Then for all $1 \le i_1 < \dots < i_{p + 1} \le n$, there exists $j = 1, \dots , p + 1$ such that $| y_{i_j} - {\x _{i_j}}^{\top } \bbe | \ge \de $. 
Therefore, there exist $1 \le i_1 < \dots < i_{n - p} \le n$ such that for all $j = 1, \dots , n - p$, we have $| y_{i_j} - {\x _{i_j}}^{\top } \bbe | \ge \de $. 
\hfill$\Box$

\begin{lem}
\label{lem:cos} 
There exists $\ep > 0$ such that for all $\bbe \in \mathbb{R} ^p$, there are $1 \le i_1 < \dots < i_{n - p + 1} \le n$ satisfying $\min_{1 \le j \le n - p + 1} | {\x _{i_j}}^{\top } \bbe | \ge \ep \| \bbe \| $. 
\end{lem}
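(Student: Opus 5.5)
Since both sides of the inequality are positively homogeneous of degree one in $\bbe $, it suffices to prove the statement for $\bbe $ on the unit sphere $S = \{ \bbe \in \mathbb{R} ^p \mid \| \bbe \| = 1 \} $. The case $\bbe = \bm{0}$ is trivial for any $\ep $. I will assume, as is implicit throughout this section (where $\X ^{\top } \U \X $ is inverted), that $\X $ is in general position: every $p$ rows $\x _{i_1} , \dots , \x _{i_p}$ are linearly independent. This is the same assumption used in Lemma \ref{lem:solution}, where $\Xbt $ is inverted for every choice of $p$ indices. The argument mirrors Lemma \ref{lem:solution}, with the homogeneous system $\x _i^{\top } \bbe = 0$ in place of $y_i = \x _i^{\top } \bbe $.

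First, for a fixed unit vector $\bbe $, at most $p - 1$ indices $i$ can satisfy $\x _i^{\top } \bbe = 0$. Otherwise $p$ linearly independent rows would all be orthogonal to the nonzero vector $\bbe $, which is impossible. Hence the function
\begin{align}
g( \bbe ) = \max_{1 \le i_1 < \dots < i_{n - p + 1} \le n} \ \min_{1 \le j \le n - p + 1} | {\x _{i_j}}^{\top } \bbe | \non
\end{align}
is strictly positive at every $\bbe \in S$: we can take the $n - p + 1$ indices outside the at most $p - 1$ vanishing ones. The function $g$ is a finite max of finite mins of continuous functions, so it is continuous. Since $S$ is compact, $\ep = \min_{S} g > 0$. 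For any $\bbe \ne \bm{0}$, applying this bound to $\bbe / \| \bbe \| $ and choosing the maximizing index set yields the claim.

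Alternatively, one can proceed quantitatively in the style of Lemma \ref{lem:solution}. For each set of $p$ indices, $\| \Xbt \bbe \| \ge \| \bbe \| / \| \Xbt ^{- 1} \| $, so some $j$ in that set has $| \x _{i_j}^{\top } \bbe | \ge \| \bbe \| / ( \sqrt{p} \| \Xbt ^{- 1} \| )$. Set $\ep = \min_{\Xbt } 1 / ( \sqrt{p} \| \Xbt ^{- 1} \| )$, the minimum over all $p$-subsets. Then every $p$-subset contains an index with $| \x _i^{\top } \bbe | \ge \ep \| \bbe \| $. Consequently the set of "small" indices, where $| \x _i^{\top } \bbe | < \ep \| \bbe \| $, has size at most $p - 1$, and its complement has at least $n - p + 1$ elements. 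I will present this version, since it matches the style of the paper.

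The only real obstacle is the general-position assumption. Without it the lemma can fail: if $p$ rows span only a proper subspace, some nonzero $\bbe $ is orthogonal to all of them, and at most $n - p$ indices then satisfy $| \x _i^{\top } \bbe | > 0$. So I will state explicitly that I am using the same nondegeneracy of $p$-row submatrices of $\X $ that Lemma \ref{lem:solution} uses.
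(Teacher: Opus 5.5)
Your presented argument is correct and is essentially the paper's proof. The paper sets $\ep_{i_1,\dots,i_p}=\inf_{\|\bbe\|=1}\|(\x_{i_1},\dots,\x_{i_p})^{\top}\bbe\|$, which is the smallest singular value and so equals your $1/\|\Xbt^{-1}\|$; it then extracts a coordinate of size at least $\ep_{i_1,\dots,i_p}\|\bbe\|/\sqrt{p}$, minimizes over all $p$-subsets, and concludes with the same counting step. The paper relies on the general-position assumption on the rows of $\X$ only implicitly (when it asserts $\|(\x_{i_1},\dots,\x_{i_p})^{\top}\bbe\|>0$ for nonzero $\bbe$), so stating it explicitly, together with your counterexample showing it is necessary, is a useful clarification rather than a departure.
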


\noindent
{\bf Proof
.} \ \ Fix $1 \le i_1 < \dots < i_p \le n$ and let 
\begin{align}
\ep _{i_1 , \dots , i_p} &= \inf_{\bbe \in \{ \bbet \in \mathbb{R} ^p | \| \bbet \| = 1 \} } \| ( \x _{i_1} , \dots , \x _{i_p} )^{\top } \bbe \| \text{.} \non 
\end{align}
Then, since $\| ( \x _{i_1} , \dots , \x _{i_p} )' \bbe \| > 0$ for all $\bbe \in \mathbb{R} ^p \setminus \{ \bm{0} ^{(p)} \} $, by continuity $0 < \ep _{i_1 , \dots , i_p} < \infty $. 
For all $\bbe \in \mathbb{R} ^p \setminus \{ \bm{0} ^{(p)} \} $, there exists $j = 1, \dots , p$ such that 
\begin{align}
{| {\x _{i_j}}^{\top } \bbe | \over \| \bbe \| } &\ge {\| ( \x _{i_1} , \dots , \x _{i_p} )^{\top } \bbe \| \over \sqrt{p} \| \bbe \| } \ge {\ep _{i_1 , \dots , i_p} \over \sqrt{p}} \text{.} \non 
\end{align}

Let 
\begin{align}
\ep &= \min_{1 \le i_1 < \dots < i_p \le n} {\ep _{i_1 , \dots , i_p} \over \sqrt{p}} > 0 \text{.} \non 
\end{align}
Fix $\bbe \in \mathbb{R} ^p$. 
Then for all $1 \le i_1 < \dots < i_p \le n$, there exists $j = 1, \dots , p$ such that $| {\x _{i_j}}^{\top } \bbe | \ge \ep \| \bbe \| $. 
Therefore, there exist $1 \le i_1 < \dots < i_{n - p + 1} \le n$ such that for all $j = 1, \dots , n - p + 1$, we have $| {\x _{i_j}}^{\top } \bbe | \ge \ep \| \bbe \| $. 
\hfill$\Box$

\begin{lem}
\label{lem:partition} 
\hfill
\begin{itemize}
\item[{\rm{(i)}}]
If $n \ge p + 1$, there exists $\de > 0$ such that 
\begin{align}
\mathbb{R} ^p &\subset \bigcup_{1 \le i_1 < \dots < i_p \le n} \bigcap_{i \in \{ 1, \dots , n \} \setminus \{ i_1 , \dots , i_p \} } \{ \bbe \in \mathbb{R} ^p | | y_i - {\x _i}^{\top } \bbe | \ge \de \} \text{.} \non 
\end{align}
\item[{\rm{(ii)}}]
If $p \ge 2$, there exist $\ep > 0$ and $R > 0$ such that 
\begin{align}
\{ \bbe \in \mathbb{R} ^p | \| \bbe \| \ge R \} &\subset \bigcup_{1 \le i_1 < \dots < i_{p - 1} \le n} \bigcap_{i \in \{ 1, \dots , n \} \setminus \{ i_1 , \dots , i_{p - 1} \} } \{ \bbe \in \mathbb{R} ^p | | y_i - {\x _i}^{\top } \bbe | \ge \ep \| \bbe \| \} \text{.} \non 
\end{align}
If $p = 1$, there exist $\ep > 0$ and $R > 0$ such that 
\begin{align}
\{ \bbe \in \mathbb{R} ^p | \| \bbe \| \ge R \} &\subset \bigcap_{i = 1}^{n} \{ \bbe \in \mathbb{R} ^p | | y_i - {\x _i}^{\top } \bbe | \ge \ep \| \bbe \| \} \text{.} \non 
\end{align}
\end{itemize}
\end{lem}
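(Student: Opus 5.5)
Part (i) is a restatement of Lemma \ref{lem:solution}. Take the $\de > 0$ given there and fix $\bbe \in \mathbb{R} ^p$. Lemma \ref{lem:solution} supplies indices $k_1 < \dots < k_{n - p}$ with $| y_{k_j} - {\x _{k_j}}^{\top } \bbe | \ge \de $ for every $j$. Let $\{ i_1 , \dots , i_p \} $ be the complement $\{ 1, \dots , n \} \setminus \{ k_1 , \dots , k_{n - p} \} $, which has exactly $p$ elements. Then $\bbe $ lies in the intersection indexed by $i_1 < \dots < i_p$, which gives the inclusion. Both lemmas implicitly use general position of the data: every $p \times p$ submatrix of $\X $ is invertible and $\de _{i_1 , \dots , i_{p + 1}} > 0$. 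I will take these as the standing assumptions already used in Lemma \ref{lem:solution}.

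For part (ii), the plan is to reduce to Lemma \ref{lem:cos} by absorbing $y_i$ for large $\| \bbe \| $. Let $\ep _0 > 0$ be the constant from Lemma \ref{lem:cos}, and put $\ep = \ep _0 / 2$ and $R = 2 \max_i | y_i | / \ep _0 + 1$. Fix $\bbe $ with $\| \bbe \| \ge R$. Lemma \ref{lem:cos} gives indices $k_1 < \dots < k_{n - p + 1}$ with $| {\x _{k_j}}^{\top } \bbe | \ge \ep _0 \| \bbe \| $. For each such index the triangle inequality gives
\begin{align}
| y_{k_j} - {\x _{k_j}}^{\top } \bbe | \ge \ep _0 \| \bbe \| - | y_{k_j} | \ge \ep _0 \| \bbe \| - \ep _0 \| \bbe \| / 2 = \ep \| \bbe \| \text{,} \non
\end{align}
where the second inequality uses $| y_{k_j} | \le \max_i | y_i | < \ep _0 R / 2 \le \ep _0 \| \bbe \| / 2$.

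The complement of $\{ k_1 , \dots , k_{n - p + 1} \} $ has exactly $p - 1$ elements. When $p \ge 2$, label them $i_1 < \dots < i_{p - 1}$; then $\bbe $ belongs to the intersection indexed by these, giving the first inclusion. When $p = 1$, the complement is empty and $n - p + 1 = n$, so all indices satisfy the bound and $\bbe $ lies in $\bigcap_{i = 1}^{n}$, giving the second inclusion. The only bookkeeping point is that the sets in Lemma \ref{lem:cos} are collections of $n - p + 1$ indices, so their complements have size $p - 1$, matching the statement.

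The main obstacle is modest: it is the choice of $R$ so that the constant term $y_i$ is dominated by the linear term ${\x _i}^{\top } \bbe $ uniformly in the index. The choice $R > 2 \max_i | y_i | / \ep _0$ does this. If the later proof of Proposition \ref{prp:trace_class} needs the extra requirement $R > 1 / \ep $ used there, we can simply enlarge $R$, since increasing $R$ keeps the inclusion valid.
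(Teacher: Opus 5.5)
Your proposal is correct and follows the paper's proof. Part (i) takes the complement of the $n-p$ indices supplied by Lemma \ref{lem:solution}. Part (ii) applies Lemma \ref{lem:cos} and absorbs $y_i$ by the triangle inequality with $\ep = \ep_0/2$. The only difference is in $R$: the paper uses $R = 2\|\y\|/\ep_0$, while your extra $+1$ simply guarantees $R > 0$ even when $\y = 0$.
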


\noindent
{\bf Proof
.} \ \ First, part (i) follows from Lemma \ref{lem:solution}. 
Next, by Lemma \ref{lem:cos}, there exists ${\ep }' > 0$ such that for all $\bbe \in \mathbb{R} ^p$, there are $1 \le i_1 < \dots < i_{n - p + 1} \le n$ satisfying $\min_{1 \le j \le n - p + 1} | {\x _{i_j}}^{\top } \bbe | \ge {\ep }' \| \bbe \| $. 
Let $\ep = {\ep }' / 2 > 0$ and $R = 2 \| \y \| / {\ep }' > 0$. 
Then for all $\bbe \in \mathbb{R} ^p$ satisfyig $\| \bbe \| \ge R$, there exist $1 \le i_1 < \dots < i_{n - p + 1} \le n$ such that for all $j = 1, \dots , n - p + 1$, we have $| y_{i_j} - {\x _{i_j}}^{\top } \bbe | \ge | {\x _{i_j}}^{\top } \bbe | - | y_{i_j} | \ge {\ep }' \| \bbe \| / 2 + {\ep }' R / 2 - | y_{i_j} | \ge \ep \| \bbe \| $. 
This proves part (ii). 
\hfill$\Box$

\section{The Case of an Improper Prior and Multivariate Responses}
\label{sec:improper_multi} 
\subsection{The model and conditional distributions} 
\label{subsec:conditional_distributions_multi} 
In this section, we consider the multivariate Student-$t$ regression model under an improper prior. 
In this case, the idea of collpsing used in the previous section is not useful since it is difficult to sample the multivariate covariance matrix from its conditional distribution conditional only on the regression coefficients. 
However, we will see that we can construct an efficient and easy-to-implement parameter expanded data augmentation (\cite{lw1999}) algorithm by introducing a univariate working parameter appropriately. 

We consider the model of \cite{hjkq2018} when the mixing density is gamma. 
Suppose that 
\begin{align}
&\y _i \sim {\rm{N}}_d ( \y _i | \bBe ^{\top } \x _i , (1 / u_i ) \bSi ) \text{,} \quad i = 1, \dots , n \text{,} \non \\
&u_i \sim {\rm{Ga}} ( u_i | a, b) \text{,} \quad i = 1, \dots , n \text{,} \non \\
&( \bBe , \bSi ) \sim 1 / | \bSi |^c \text{,} \non 
\end{align}
where $\Y = ( \y _1 , \dots , \y _n )^{\top } \in \mathbb{R} ^{n \times d}$ and $\X = ( \x _1 , \dots , \x _n )^{\top } \in \mathbb{R} ^{n \times p}$ are outcome and explanatory variables while $\bBe \in \mathbb{R} ^{p \times d}$ and $\bSi > \O ^{(d)}$ are the matrix of regression coefficients and the covariance matrix. 
Here, $\u = ( u_i )_{i = 1}^{n}$ are the mixing variables and 
\begin{align}
p( \y _i | \bBe , \bSi ) %
&= {\Ga (d / 2 + a) b^a \over (2 \pi )^{d / 2} \Ga (a)} {1 \over | \bSi |^{d / 2}} {1 \over \{ b + ( \y _i - \bBe ^{\top } \x _i )^{\top } \bSi ^{- 1} ( \y _i - \bBe ^{\top } \x _i ) / 2 + b \} ^{d / 2 + a}} \text{.} \non 
\end{align}
We treat the case $c \ge d - (n - p - 1) / 2$ and we are primarily interested in the special case $c = (d + 1) / 2$. 
We assume posterior propriety; for some conditions for posterior propriety, see, for example, Proposition A1 of \cite{h2026}. 
In the remainder of this section, we suppress the dependence on $\Y $. 

In order to construct a parameter expanded data augmentation algorithm, we let 
\begin{align}
&\al \sim {\rm{Ga}} ( \al | e, f) \non 
\end{align}
for $e, f > 0$ and use the transformation 
\begin{align}
&\bSi = \bSit / \al \text{.} \non 
\end{align}
Since 
\begin{align}
p( \bBe , \bSi , \u ) &\propto {1 \over | \bSi |^{n / 2 + c}} \prod_{i = 1}^{n} \Big[ {u_i}^{d / 2 + a - 1} e^{- b u_i} \exp \Big\{ - {u_i \over 2} ( \y _i - \bBe ^{\top } \x _i )^{\top } \bSi ^{- 1} ( \y _i - \bBe ^{\top } \x _i ) \Big\} \Big] \text{,} \non 
\end{align}
the joint density of $( \bBe , \bSit , \u , \al )$ is 
\begin{align}
p( \bBe , \bSit , \u , \al ) &\propto p( \al ) {\al ^{d (n / 2 + c) - d (d + 1) / 2} \over | \bSit |^{n / 2 + c}} \prod_{i = 1}^{n} \Big[ {u_i}^{d / 2 + a - 1} e^{- b u_i} \exp \Big\{ - {\al \over 2} u_i ( \y _i - \bBe ^{\top } \x _i )^{\top } \bSit ^{- 1} ( \y _i - \bBe ^{\top } \x _i ) \Big\} \Big] \text{.} \non 
\end{align}

First, 
\begin{align}
p( \bBe , \bSit , \al | \u ) &\propto p( \al ) {\al ^{d (n / 2 + c) - d (d + 1) / 2} \over | \bSit |^{n / 2 + c}} \prod_{i = 1}^{n} \Big[ \exp \Big\{ - {\al \over 2} u_i ( \y _i - \bBe ^{\top } \x _i )^{\top } \bSit ^{- 1} ( \y _i - \bBe ^{\top } \x _i ) \Big\} \Big] \text{.} \non 
\end{align}
Since 
\begin{align}
\sum_{i = 1}^{n} u_i ( \y _i - \bBe ^{\top } \x _i )^{\top } \bSit ^{- 1} ( \y _i - \bBe ^{\top } \x _i ) &= \tr \{ \bSit ^{- 1} ( \Y - \X \bBe )^{\top } \U ( \Y - \X \bBe ) \} \non 
\end{align}
where $\U = \bdiag \u $ and since 
\begin{align}
( \Y - \X \bBe )^{\top } \U ( \Y - \X \bBe ) &= \{ \bBe - ( \X ^{\top } \U \X )^{- 1} \X ^{\top } \U \Y \} ^{\top } \X ^{\top } \U \X \{ \bBe - ( \X ^{\top } \U \X )^{- 1} \X ^{\top } \U \Y \} \non \\
&\quad + \Y ^{\top } \U \Y - \Y ^{\top } \U \X ( \X ^{\top } \U \X )^{- 1} \X ^{\top } \U \Y \text{,} \non 
\end{align}
it follows that 
\begin{align}
p( \bBe , \bSit , \al | \u ) &\propto p( \al ) {\al ^{d (n / 2 + c) - d (d + 1) / 2} \over | \bSit |^{n / 2 + c}} \exp \Big( - {\al \over 2} \tr [ \bSit ^{- 1} \{ \Y ^{\top } \U \Y - \Y ^{\top } \U \X ( \X ^{\top } \U \X )^{- 1} \X ^{\top } \U \Y \} ] \Big) \non \\
&\quad \times \exp \Big( - {\al \over 2} \tr [ \bSit ^{- 1} \{ \bBe - ( \X ^{\top } \U \X )^{- 1} \X ^{\top } \U \Y \} ^{\top } \X ^{\top } \U \X \{ \bBe - ( \X ^{\top } \U \X )^{- 1} \X ^{\top } \U \Y \} ] \Big) \non \\
&\propto p( \al ) {\rm{IW}}_d ( \bSit | n - p - d - 1 + 2 c, \{ \Y ^{\top } \U \Y - \Y ^{\top } \U \X ( \X ^{\top } \U \X )^{- 1} \X ^{\top } \U \Y \} ^{- 1} / \al ) \non \\
&\quad \times {\rm{N}}_{p, d} ( \bBe | ( \X ^{\top } \U \X )^{- 1} \X ^{\top } \U \Y , ( \X ^{\top } \U \X )^{- 1} , \bSit / \al ) \text{.} \non 
\end{align}
Next, 
\begin{align}
p( \u , \al | \bBe , \bSit ) &\propto p( \al ) \al ^{d (n / 2 + c) - d (d + 1) / 2} \prod_{i = 1}^{n} \Big[ {u_i}^{d / 2 + a - 1} e^{- b u_i} \exp \Big\{ - {\al \over 2} u_i ( \y _i - \bBe ^{\top } \x _i )^{\top } \bSit ^{- 1} ( \y _i - \bBe ^{\top } \x _i ) \Big\} \Big] \non \\
&\propto p( \al ) \al ^{d (n / 2 + c) - d (d + 1) / 2} \prod_{i = 1}^{n} {{\rm{Ga}} ( u_i | d / 2 + a, b + \al ( \y _i - \bBe ^{\top } \x _i )^{\top } \bSit ^{- 1} ( \y _i - \bBe ^{\top } \x _i ) / 2) \over \{ b + \al ( \y _i - \bBe ^{\top } \x _i )^{\top } \bSit ^{- 1} ( \y _i - \bBe ^{\top } \x _i ) / 2 \} ^{d / 2 + a}} \text{.} \non 
\end{align}
Thus, we can use the following parameter expanded data augmentation algorithm. 

\begin{algo}
\label{algo:PX} 
The parameters $\bBe $, $\bSit $, $\u $, and $\al $ are updated in the following way. 
\begin{itemize}
\item
Sample $( \bBe , \bSit , \al ) \sim p( \bBe , \bSit , \al | \u )$ by 
\begin{itemize}
\item
first sampling 
\begin{align}
\al &\sim {\rm{Ga}} ( \al | e, f) \text{,} \non 
\end{align}
\item
second sampling 
\begin{align}
\bSit &\sim {\rm{IW}}_d ( \bSit | n - p - d - 1 + 2 c, \{ \Y ^{\top } \U \Y - \Y ^{\top } \U \X ( \X ^{\top } \U \X )^{- 1} \X ^{\top } \U \Y \} ^{- 1} / \al ) \text{,} \non 
\end{align}
\item
and third sampling 
\begin{align}
\bBe &\sim {\rm{N}}_{p, d} ( \bBe | ( \X ^{\top } \U \X )^{- 1} \X ^{\top } \U \Y , ( \X ^{\top } \U \X )^{- 1} , \bSit / \al ) \text{.} \non 
\end{align}
\end{itemize}
\item
Sample $( \u , \al ) \sim p( \u , \al | \bBe , \bSit )$ by 
\begin{itemize}
\item
first sampling 
\begin{align}
\al &\sim p( \al | \bBe , \bSit ) \non \\
&\propto {\al ^{d \{ n / 2 + c - (d + 1) / 2 \} + e - 1} \exp (- f \al ) \over \prod_{i = 1}^{n} \{ b + \al ( \y _i - \bBe ^{\top } \x _i )^{\top } \bSit ^{- 1} ( \y _i - \bBe ^{\top } \x _i ) / 2 \} ^{d / 2 + a}} \non 
\end{align}
\item
and then sampling 
\begin{align}
\u &\sim \prod_{i = 1}^{n} {\rm{Ga}} ( u_i | d / 2 + a, \al ( \y _i - \bBe ^{\top } \x _i )^{\top } \bSit ^{- 1} ( \y _i - \bBe ^{\top } \x _i ) / 2 + b) \text{.} \non 
\end{align}
\end{itemize}
\end{itemize}
\end{algo}

The conditional density of $( \log \al ) | ( \bBe , \bSit )$ is log-concave for example if $c = (d + 1) / 2$ and $e \ge 1$. 
In this case, we can use the method of \cite{gw1992}.

\subsection{Trace-class results} 
\label{subsec:trace-class_improper_multi} 
Algorithm \ref{algo:PX} is a data augmentation algorithm based on the joint density $p( \bBe , \bSit , \u )$. 
The transition density is given by 
\begin{align}
k( \bBe _{\rm{new}} , \bSit _{\rm{new}} | \bBe _{\rm{old}} , \bSit _{\rm{old}} ) &= \int_{(0, \infty )^n} p( \bBe _{\rm{new}} , \bSit _{\rm{new}} | \u ) p( \u | \bBe _{\rm{old}} , \bSit _{\rm{old}} ) d\u \text{,} \non 
\end{align}
$( \bBe _{\rm{new}} , \bSit _{\rm{new}} ), ( \bBe _{\rm{old}} , \bSit _{\rm{old}} ) \in \mathbb{R} ^{p \times d} \times \mathcal{S} ^{>} (d)$, where $\mathcal{S} ^{>} (d) = \{ \M \in \mathbb{R} ^{d \times d} | \M > \O ^{(d)} \} $. 
It follows from Theorem 2 of \cite{qhk2019} that the Markov operator based on this transition density is trace-class if and only if $I < \infty $, where 
\begin{align}
I &= \int_{\mathbb{R} ^{p \times d} \times \mathcal{S} ^{>} (d) \times (0, \infty )^n} p( \bBe , \bSit | \u ) p( \u | \bBe , \bSit ) d( \bBe , \bSit , \u ) \text{.} \non 
\end{align}
By Section \ref{subsec:conditional_distributions_multi}, 
\begin{align}
&p( \bBe , \bSit | \u ) \non \\
&= \int_{0}^{\infty } {\al ^{d \{ n / 2 + c - (d + 1) / 2 \} + e - 1} \over | \bSit |^{n / 2 + c}} e^{- f \al } \exp \Big\{ - {\al \over 2} \sum_{i = 1}^{n} u_i ( \y _i - \bBe ^{\top } \x _i )^{\top } \bSit ^{- 1} ( \y _i - \bBe ^{\top } \x _i ) \Big\} d\al \non \\
&\quad / \int_{0}^{\infty } \Big[ \int {\al ^{d \{ n / 2 + c - (d + 1) / 2 \} + e - 1} \over | \bSit |^{n / 2 + c}} e^{- f \al } \exp \Big\{ - {\al \over 2} \sum_{i = 1}^{n} u_i ( \y _i - \bBe ^{\top } \x _i )^{\top } \bSit ^{- 1} ( \y _i - \bBe ^{\top } \x _i ) \Big\} d( \bBe , \bSit ) \Big] d\al \non \\
&= {C_1 \over | \bSit |^{n / 2 + c}} {| \X ^{\top } \U \X |^{d / 2} | \Y ^{\top } \U \Y - \Y ^{\top } \U \X ( \X ^{\top } \U \X )^{- 1} \X ^{\top } \U \Y |^{(n - p - d - 1) / 2 + c} \over \big\{ f + \sum_{i = 1}^{n} u_i ( \y _i - \bBe ^{\top } \x _i )^{\top } \bSit ^{- 1} ( \y _i - \bBe ^{\top } \x _i ) / 2 \big\} ^{d \{ n / 2 + c - (d + 1) / 2 \} + e}} \label{improper_multi_1} 
\end{align}
for some constant $C_1 > 0$ and 
\begin{align}
&p( \u | \bBe , \bSit ) \non \\
&= \int_{0}^{\infty } \al ^{d \{ n / 2 + c - (d + 1) / 2 \} + e - 1} e^{- f \al } \Big[ \prod_{i = 1}^{n} {{u_i}^{d / 2 + a - 1} e^{- b u_i} \over \exp \{ ( \al / 2) u_i ( \y _i - \bBe ^{\top } \x _i )^{\top } \bSit ^{- 1} ( \y _i - \bBe ^{\top } \x _i ) \} } \Big] d\al \non \\
&\quad / \int_{0}^{\infty } \al ^{d \{ n / 2 + c - (d + 1) / 2 \} + e - 1} e^{- f \al } \Big[ \prod_{i = 1}^{n} \int_{0}^{\infty } {{u_i}^{d / 2 + a - 1} e^{- b u_i} \over \exp \{ ( \al / 2) u_i ( \y _i - \bBe ^{\top } \x _i )^{\top } \bSit ^{- 1} ( \y _i - \bBe ^{\top } \x _i ) \} } d{u_i} \Big] d\al \non \\
&= C_2 \frac{ \big\{ \prod_{i = 1}^{n} ( {u_i}^{d / 2 + a - 1} e^{- b u_i} ) \big\} / \big\{ f + \sum_{i = 1}^{n} u_i ( \y _i - \bBe ^{\top } \x _i )^{\top } \bSit ^{- 1} ( \y _i - \bBe ^{\top } \x _i ) / 2 \big\} ^{d \{ n / 2 + c - (d + 1) / 2 \} + e} }{ \int_{0}^{\infty } \big[ \al ^{d \{ n / 2 + c - (d + 1) / 2 \} + e - 1} e^{- f \al } / \prod_{i = 1}^{n} \{ b + \al ( \y _i - \bBe ^{\top } \x _i )^{\top } \bSit ^{- 1} ( \y _i - \bBe ^{\top } \x _i ) / 2 \} ^{d / 2 + a} \big] d\al } \text{.} \label{improper_multi_2} 
\end{align}
for some constant $C_2 > 0$. 

The following result is the main result of this section. 

\begin{prp}
\label{prp:trace-class_improper_multi} 
Assume that $n \ge p + d + 1$, that $n + 2 c > 3 p$, and that $e > d (d - 1) / 2$. 
Suppose that 
\begin{align}
&{1 \over 2} + {a \over d} > {n + 2 c - (d + 1) + 2 e / d \over n + 1 - p - d} \text{.} \label{eq:condition_improper_multi} 
\end{align}
The Markov operator associated with Algorithm \ref{algo:PX} is trace-class. 
\end{prp}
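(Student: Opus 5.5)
The plan follows the proof of Proposition \ref{prp:trace_class}. By Theorem 2 of \cite{qhk2019} it suffices to show $I<\infty$, and I would bound the integrand through (\ref{improper_multi_1}) and (\ref{improper_multi_2}). Write
\[
K=d\{n/2+c-(d+1)/2\}+e,\qquad q_i=(\y_i-\bBe^\top\x_i)^\top\bSit^{-1}(\y_i-\bBe^\top\x_i),\qquad Q=\sum_{i=1}^n u_iq_i .
\]
The product of the two conditionals then has the following shape:
\begin{itemize}
\item the factor $(f+Q/2)^{-2K}$;
\item the factor $|\bSit|^{-(n/2+c)}$;
\item the determinant factor $|\X^\top\U\X|^{d/2}\,|\Y^\top\U\Y-\cdots|^{(n-p-d-1)/2+c}$;
\item the factor $\prod_i u_i^{d/2+a-1}e^{-bu_i}$;
\item division by the $\al$-integral in the denominator of (\ref{improper_multi_2}).
\end{itemize}

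First I dispose of the easy factors. The exponent $(n-p-d-1)/2+c$ is nonnegative because $c\ge d-(n-p-1)/2$. Hence the determinant factor is bounded by a polynomial in $\u$, which is absorbed into $e^{-bu_i/2}$, exactly as with $M_2$ before.

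Next I bound the $\al$-integral from below. Restricting it to $\al\le 1/(1+\sum_i q_i)$ makes each factor $b+\al q_i/2$ bounded and $e^{-f\al}$ bounded below, so the integral is at least a constant times $(1+\sum_iq_i)^{-K}$. This plays the role of the bound by $B(na,n/2)(1+\|\bbe\|^2)^{-n/2}$ in the univariate case. The integrand is therefore bounded by
\[
M\,\frac{(1+\sum_iq_i)^{K}}{|\bSit|^{n/2+c}\,(f+Q/2)^{2K}}\prod_{i=1}^n u_i^{d/2+a-1}e^{-bu_i/2}.
\]

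I then change variables $\bLambda=\bSit^{-1}$, with Jacobian $|\bLambda|^{-(d+1)}$, so the $\bSit$-dependence becomes $|\bLambda|^{n/2+c-d-1}$ times functions of the quadratic forms in $\bLambda$. Before doing so I would factor $\bLambda=s\,\bLambda_0$ with $\tr\bLambda_0=1$, because $Q$ and $\sum_iq_i$ are both linear in $s$.
\begin{itemize}
\item \textbf{Integrating out $s$.} The ratio $(1+s\,\cdot)^K/(f+s\,\cdot)^{2K}$ against $s^{d(n/2+c-d-1)+d(d+1)/2-1}$ is integrable at both ends. At $s\to\infty$ this is exactly what the total degree $2K-K=K$ versus $d(n/2+c)-d(d+1)/2$ gives, the margin being $e$. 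The output is a negative power of $Q/\sum q_i$ times powers of the $\bLambda_0$-forms.
\item \textbf{The compact set $\{\tr\bLambda_0=1\}$.} Near singular $\bLambda_0$, integrability is where $e>d(d-1)/2$ enters: the smallest eigenvalues of $\bLambda_0$ carry the power $|\bLambda_0|^{(n/2+c)-d-1}$, and this must compensate the blow-up of $\sum_iq_i/Q$-type ratios.
\end{itemize}

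For the $\bBe$ and $\u$ integration I split exactly as in the proof of Proposition \ref{prp:trace_class}.
\begin{itemize}
\item I apply Lemma \ref{lem:partition} row-wise, that is, to each column of $\bBe$ after projecting by a direction of $\bLambda_0$, or more simply to the residual vectors $\y_i-\bBe^\top\x_i$.
\item For bounded $\bBe$ I bound $Q$ below by AM--GM over the $n-p$ indices whose residuals exceed $\de$. For large $\bBe$ I use the $n-p+1$ indices with residual at least $\ep\|\bBe\|$, and the $p$ remaining indices each contribute $1/\{1+(\cdot)^2\}$-type integrable factors in $\bBe$. The hypothesis $n+2c>3p$ should make the $\|\bBe\|$-powers integrable over $\mathbb{R}^{p\times d}$.
\item This produces factors $u_i^{-\gamma}$ for each $i$, with $\gamma=(2K)/(n+1-p-d)\cdot(\text{normalisation})$. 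Integrability of $u_i^{d/2+a-1-\gamma}$ near $0$ is then precisely (\ref{eq:condition_improper_multi}): dividing $d/2+a>\gamma$ by $d$ gives $1/2+a/d>(n+2c-(d+1)+2e/d)/(n+1-p-d)$. The count $n+1-p-d$ replaces $n-p$ because $d$ additional degrees are consumed by $\bLambda_0$.
\end{itemize}

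I expect the main obstacle to be the joint control of $\bLambda_0$ near the boundary of the positive cone together with the residual lower bounds. The AM--GM step must be done with $\bLambda_0$ present, giving $q_i\ge\lambda_{\min}(\bLambda_0)\|\y_i-\bBe^\top\x_i\|^2$, and the resulting negative powers of $\lambda_{\min}$ must be paid for by $|\bLambda_0|$ and by $e$. My first attempt would be to write $\bLambda_0$ in spectral form and check the exponent of each eigenvalue separately.
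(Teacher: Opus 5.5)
Your reduction to $I<\infty$, your handling of the determinant factor, and your lower bound $(1+\sum_i q_i)^{-K}$ on the $\al$-integral are fine. The paper gets the same bound via Lemma~\ref{lem:denominator}, with $T=\tr(\bSit^{-1})+\tr(\bBe\bSit^{-1}\bBe^\top)$ (called $Q$ in the paper) in place of $\sum_i q_i$.

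The gap is the step that carries the whole argument: a lower bound on $Q=\sum_i u_i q_i$ that is uniform in $(\bBe,\bSit)$ and splits off the $\u$-dependence. You propose to obtain it from Lemma~\ref{lem:partition}, which controls the residual vectors $\y_i-\bBe^\top\x_i$ only in Euclidean norm, followed by $q_i\ge\lambda_{\min}(\bm{\Lambda}_0)\|\y_i-\bBe^\top\x_i\|^2$. This fails.
\begin{itemize}
\item Your $s$-integral is bounded below by a constant times $B^{-(K-e)}$, where $B=\sum_i u_i q_i^0$ is built from the forms evaluated at $\bm{\Lambda}_0$. The conversion therefore leaves a factor $\lambda_{\min}(\bm{\Lambda}_0)^{-(K-e)}$ with $K-e=d\{n/2+c-(d+1)/2\}$, which does not depend on $e$.
\item Near a point where exactly one eigenvalue of $\bm{\Lambda}_0$ vanishes, $|\bm{\Lambda}_0|^{n/2+c-d-1}$ supplies only $\lambda_{\min}^{n/2+c-d-1}$. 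For $d\ge 2$ the net exponent is below $-1$ precisely when $n/2+c>d/2$, which always holds here. So the $\bm{\Lambda}_0$-integral diverges, and neither $e$ nor $|\bm{\Lambda}_0|$ can pay for the loss.
\item Projecting onto a direction $\v$ and applying Lemma~\ref{lem:partition} to $(\X,\Y\v)$ does not help. For $d\ge2$ the matrix $(\X_{\i},\Y_{\i}\v)$ is singular for some $\v$ even when the data are in general position, so the constants are not uniform in $\v$.
\end{itemize}
For the same reason, the count $n+1-p-d$ and the exponent $2K$ in your last step are inserted to reproduce (\ref{eq:condition_improper_multi}), not derived. Your AM--GM over the indices with large residuals would give $n-p$.

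The missing idea is Lemma~\ref{lem:numerator}. Assume every $p+d$ rows of $(\X,\Y)$ are linearly independent; the paper uses this implicitly. Then for each $\i=(i_1,\dots,i_{p+d})$ the Gram matrix of $(\X_{\i},\Y_{\i})$ is bounded below by a multiple of $\I^{(p+d)}$. Writing $\Y_{\i}-\X_{\i}\bBe$ as $(\X_{\i},\Y_{\i})$ applied to the stacked matrix with blocks $-\bBe$ and $\I^{(d)}$ gives $\sum_{i\in\i}q_i\ge T/M'$. This bound sees how the residuals are oriented relative to $\bSit$, so there is no $\lambda_{\min}$ loss and no partition of $\bBe$-space is needed.

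Next, restrict by symmetry to $u_1\le\dots\le u_n$ and apply the bound to the $n+1-p-d$ windows $\{i,\dots,i+p+d-1\}$, whose smallest weight is $u_i$. Taking the geometric mean gives $(f+Q/2)^{-1}\le M''\prod_{i=1}^{n+1-p-d}(1+1/u_i)^{1/(n+1-p-d)}/(1+T)$; this is where $n+1-p-d$ comes from. Using $\sum_i q_i\le nMT$ in your $\al$-bound, the integrand is at most a constant times the product of two factors:
\begin{itemize}
\item $\prod_{i=1}^{n}(1+1/u_i)^{2K/(n+1-p-d)}u_i^{d/2+a-1}e^{-bu_i/2}$, whose integral is finite exactly under (\ref{eq:condition_improper_multi});
\item $|\bSit|^{-(n/2+c)}(1+T)^{-K}$, which is handled by Lemma~\ref{lem:t_beta}.
\end{itemize}
In Lemma~\ref{lem:t_beta}, the substitution $\bBe=\widetilde{\bBe}(\bSit^{1/2})^\top$ removes $\bSit$ from the $\bBe$-form at the cost of $|\bSit|^{p/2}$, and a matrix-variate beta type II integral finishes. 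That lemma is the only place the paper uses $e>d(d-1)/2$ and $n+2c>3p$. It is not the near-singular $\bm{\Lambda}_0$ mechanism you describe.
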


If we set $e = d (d + 1) / 2$, (\ref{eq:condition_improper_multi}) becomes 
\begin{align}
&{1 \over 2} + {a \over d} > {n + 2 c \over n + 1 - p - d} \text{.} \non 
\end{align}
Therefore, if $a / d > 1 / 2$, the conclusion of the proposition holds for $e = d (d + 1) / 2$ for sufficiently large $n$. 

\bigskip

\noindent
{\bf Proof of Proposition \ref{prp:trace-class_improper_multi}.} \ \ We show that 
\begin{align}
I' &= \int_{\mathbb{R} ^{p \times d} \times \mathcal{S} ^{>} (d) \times (0, \infty )^n} 1( u_1 \le \dots \le u_n ) p( \bBe , \bSit | \u ) p( \u | \bBe , \bSit ) d( \bBe , \bSit , \u ) < \infty \text{.} \non 
\end{align}
By (\ref{improper_multi_1}) and (\ref{improper_multi_2}), 
\begin{align}
&1( u_1 \le \dots \le u_n ) p( \bBe , \bSit | \u ) p( \u | \bBe , \bSit ) \non \\
&\le M_1 {1 \over | \bSit |^{n / 2 + c}} {| \X ^{\top } \U \X |^{d / 2} | \Y ^{\top } \U \Y - \Y ^{\top } \U \X ( \X ^{\top } \U \X )^{- 1} \X ^{\top } \U \Y |^{(n - p - d - 1) / 2 + c} \over \big\{ f + \sum_{i = 1}^{n} u_i ( \y _i - \bBe ^{\top } \x _i )^{\top } \bSit ^{- 1} ( \y _i - \bBe ^{\top } \x _i ) / 2 \big\} ^{2 [d \{ n / 2 + c - (d + 1) / 2 \} + e]}} \non \\
&\quad \times \frac{ \prod_{i = 1}^{n} ( {u_i}^{d / 2 + a - 1} e^{- b u_i} ) }{ \int_{0}^{\infty } \big[ \al ^{d \{ n / 2 + c - (d + 1) / 2 \} + e - 1} e^{- f \al } / \prod_{i = 1}^{n} \{ b + \al ( \y _i - \bBe ^{\top } \x _i )^{\top } \bSit ^{- 1} ( \y _i - \bBe ^{\top } \x _i ) / 2 \} ^{d / 2 + a} \big] d\al } \non \\
&\le M_2 {1 \over | \bSit |^{n / 2 + c}} {| \X ^{\top } \U \X |^{d / 2} | \Y ^{\top } \U \Y - \Y ^{\top } \U \X ( \X ^{\top } \U \X )^{- 1} \X ^{\top } \U \Y |^{(n - p - d - 1) / 2 + c} \over \{ 1 + \tr ( \bSit ^{- 1} ) + \tr ( \bBe \bSit ^{- 1} \bBe ^{\top } ) \} ^{2 [d \{ n / 2 + c - (d + 1) / 2 \} + e]}} \non \\
&\quad \times \frac{ \prod_{i = 1}^{n} ((1 + 1 / u_i )^{2 [d \{ n / 2 + c - (d + 1) / 2 \} + e] / (n + 1 - p - d)} {u_i}^{d / 2 + a - 1} e^{- b u_i} ) }{ \int_{0}^{\infty } \big( \al ^{d \{ n / 2 + c - (d + 1) / 2 \} + e - 1} e^{- f \al } / \prod_{i = 1}^{n} [b + M \al \{ \tr ( \bSit ^{- 1} ) + \tr ( \bBe \bSit ^{- 1} \bBe ^{\top } ) \} / 2]^{d / 2 + a} \big) d\al } \non 
\end{align}
for some $M_1 , M_2 > 0$, where the second inequality follows from part (ii) of Lemma \ref{lem:numerator} and Lemma \ref{lem:denominator}. 
Since 
\begin{align}
&\sup_{\u \in (0, \infty )^n} \Big\{ | \X ^{\top } \U \X |^{d / 2} | \Y ^{\top } \U \Y - \Y ^{\top } \U \X ( \X ^{\top } \U \X )^{- 1} \X ^{\top } \U \Y |^{(n - p - d - 1) / 2 + c} \prod_{i = 1}^{n} e^{- b u_i / 2} \Big\} < \infty \text{,} \non 
\end{align}
we have 
\begin{align}
&1( u_1 \le \dots \le u_n ) p( \bBe , \bSit | \u ) p( \u | \bBe , \bSit ) \non \\
&\le M_3 {1 \over | \bSit |^{n / 2 + c} (1 + Q)^{2 [d \{ n / 2 + c - (d + 1) / 2 \} + e]}} \non \\
&\quad \times \frac{ \prod_{i = 1}^{n} ((1 + 1 / u_i )^{2 [d \{ n / 2 + c - (d + 1) / 2 \} + e] / (n + 1 - p - d)} {u_i}^{d / 2 + a - 1} e^{- b u_i / 2} ) }{ \int_{0}^{\infty } \big[ \al ^{d \{ n / 2 + c - (d + 1) / 2 \} + e - 1} e^{- f \al } / \prod_{i = 1}^{n} (1 + \al Q)^{d / 2 + a} \big] d\al } \non 
\end{align}
for some $M_3 > 0$, where $Q = \tr ( \bSit ^{- 1} ) + \tr ( \bBe \bSit ^{- 1} \bBe ^{\top } )$. 
Note that 
\begin{align}
\int_{0}^{\infty } {\al ^{d \{ n / 2 + c - (d + 1) / 2 \} + e - 1} e^{- f \al } \over \prod_{i = 1}^{n} (1 + \al Q)^{d / 2 + a}} d\al &= \int_{0}^{\infty } {\alt ^{d \{ n / 2 + c - (d + 1) / 2 \} + e - 1} e^{- f \alt / Q} \over Q^{d \{ n / 2 + c - (d + 1) / 2 \} + e} \prod_{i = 1}^{n} (1 + \alt )^{d / 2 + a}} d\alt \non \\
&\ge {1 \over Q^{d \{ n / 2 + c - (d + 1) / 2 \} + e}} \int_{0}^{\infty } {\alt ^{d \{ n / 2 + c - (d + 1) / 2 \} + e - 1} e^{- f \alt } \over \prod_{i = 1}^{n} (1 + \alt )^{d / 2 + a}} d\alt \non 
\end{align}
if $Q \ge 1$ and that 
\begin{align}
\int_{0}^{\infty } {\al ^{d \{ n / 2 + c - (d + 1) / 2 \} + e - 1} e^{- f \al } \over \prod_{i = 1}^{n} (1 + \al Q)^{d / 2 + a}} d\al &\ge \int_{0}^{\infty } {\al ^{d \{ n / 2 + c - (d + 1) / 2 \} + e - 1} e^{- f \al } \over \prod_{i = 1}^{n} (1 + \al )^{d / 2 + a}} d\al \non 
\end{align}
if $Q < 1$. 
Then 
\begin{align}
&1( u_1 \le \dots \le u_n ) p( \bBe , \bSit | \u ) p( \u | \bBe , \bSit ) \non \\
&\le M_4 {(1 + Q)^{d \{ n / 2 + c - (d + 1) / 2 \} + e} \over | \bSit |^{n / 2 + c} (1 + Q)^{2 [d \{ n / 2 + c - (d + 1) / 2 \} + e]}} \non \\
&\quad \times \prod_{i = 1}^{n} ((1 + 1 / u_i )^{2 [d \{ n / 2 + c - (d + 1) / 2 \} + e] / (n + 1 - p - d)} {u_i}^{d / 2 + a - 1} e^{- b u_i / 2} ) \non 
\end{align}
for some $M_4 > 0$ and 
\begin{align}
I' &\le M_5 \int_{\mathbb{R} ^{p \times d} \times \mathcal{S} ^{>} (d)} {1 \over | \bSit |^{n / 2 + c} (1 + Q)^{d \{ n / 2 + c - (d + 1) / 2 \} + e}} d( \bBe , \bSit ) \non 
\end{align}
for some $M_5 > 0$, which is finite by Lemma \ref{lem:t_beta}. 
\hfill$\Box$

\begin{lem}
\label{lem:denominator} 
There exists $M > 0$ such that 
\begin{align}
( \y _i - \bBe ^{\top } \x _i )^{\top } \bSit ^{- 1} ( \y _i - \bBe ^{\top } \x _i ) &\le M \{ \tr ( \bSit ^{- 1} ) + \tr ( \bBe \bSit ^{- 1} \bBe ^{\top } ) \} \non 
\end{align}
for all $i = 1, \dots , n$. 
\end{lem}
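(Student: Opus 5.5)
The plan is to expand the quadratic form, bound each piece by the Cauchy--Schwarz inequality in the inner product defined by $\bSit ^{- 1}$, and then compare the two resulting pieces with $\tr ( \bSit ^{- 1} )$ and $\tr ( \bBe \bSit ^{- 1} \bBe ^{\top } )$.

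Fix $i$ and write $\v = \y _i - \bBe ^{\top } \x _i$. Since $\bSit ^{- 1} > \O ^{(d)}$, the map $(\u , \v ) \mapsto \u ^{\top } \bSit ^{- 1} \v $ is an inner product. The inequality $(\u - \v )^{\top } \bSit ^{- 1} (\u - \v ) \le 2 \u ^{\top } \bSit ^{- 1} \u + 2 \v ^{\top } \bSit ^{- 1} \v $ therefore gives
\begin{align}
( \y _i - \bBe ^{\top } \x _i )^{\top } \bSit ^{- 1} ( \y _i - \bBe ^{\top } \x _i ) &\le 2 {\y _i}^{\top } \bSit ^{- 1} \y _i + 2 {\x _i}^{\top } \bBe \bSit ^{- 1} \bBe ^{\top } \x _i \text{.} \non
\end{align}

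Both terms are now bounded by maximal eigenvalues.
\begin{itemize}
\item[(1)] For the first term, ${\y _i}^{\top } \bSit ^{- 1} \y _i \le \| \y _i \| ^2 \lambda _{\max } ( \bSit ^{- 1} ) \le \| \y _i \| ^2 \tr ( \bSit ^{- 1} )$. This holds because $\bSit ^{- 1}$ is positive definite, so its largest eigenvalue is at most its trace.
\item[(2)] For the second term, the $p \times p$ matrix $\bBe \bSit ^{- 1} \bBe ^{\top }$ is positive semidefinite. Hence ${\x _i}^{\top } \bBe \bSit ^{- 1} \bBe ^{\top } \x _i \le \| \x _i \| ^2 \lambda _{\max } ( \bBe \bSit ^{- 1} \bBe ^{\top } ) \le \| \x _i \| ^2 \tr ( \bBe \bSit ^{- 1} \bBe ^{\top } )$.
\end{itemize}

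Combining (1) and (2), the lemma holds with
\begin{align}
M &= 2 \max_{1 \le i \le n} ( \| \y _i \| ^2 + \| \x _i \| ^2 ) \text{,} \non
\end{align}
or $M = 1$ if this maximum is zero. This constant depends only on the data and not on $( \bBe , \bSit )$, which is what the lemma asserts.

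There is no real obstacle here. The only point needing care is the trace--eigenvalue comparison, which requires positive semidefiniteness. For $\bBe \bSit ^{- 1} \bBe ^{\top }$ this holds because $\bBe \bSit ^{- 1} \bBe ^{\top } = ( \bBe \bSit ^{- 1 / 2} )( \bBe \bSit ^{- 1 / 2} )^{\top }$.
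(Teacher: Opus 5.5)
Your proof is correct and follows essentially the same route as the paper. The paper also starts from $(\y_i - \bBe^{\top}\x_i)^{\top}\bSit^{-1}(\y_i - \bBe^{\top}\x_i) \le 2(\y_i^{\top}\bSit^{-1}\y_i + \x_i^{\top}\bBe\bSit^{-1}\bBe^{\top}\x_i)$, rewrites the two terms as $\tr(\bSit^{-1}\y_i\y_i^{\top})$ and $\tr(\bBe\bSit^{-1}\bBe^{\top}\x_i\x_i^{\top})$, and bounds the rank-one factors by a multiple of the identity, which is exactly your eigenvalue--trace comparison. Your explicit constant $M = 2\max_i(\|\y_i\|^2 + \|\x_i\|^2)$ is cleaner than the paper's condition $M\I^{(d)} > \X^{\top}\X + \Y^{\top}\Y$, which loosely mixes $p \times p$ and $d \times d$ matrices.
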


\noindent
{\bf Proof
.} \ \ Let $M > 0$ be such that $M \I ^{(d)} > \X ^{\top } \X + \Y ^{\top } \Y $. 
Then 
\begin{align}
( \y _i - \bBe ^{\top } \x _i )^{\top } \bSit ^{- 1} ( \y _i - \bBe ^{\top } \x _i ) &\le 2 ( {\y _i}^{\top } \bSit ^{- 1} \y _i + {\x _i}^{\top } \bBe \bSit ^{- 1} \bBe ^{\top } \x _i ) \non \\
&= 2 \{ \tr ( \bSit ^{- 1} \y _i {\y _i}^{\top } ) + \tr ( \bBe \bSit ^{- 1} \bBe ^{\top } \x _i {\x _i}^{\top } ) \} \non \\
&\le M \{ \tr ( \bSit ^{- 1} ) + \tr ( \bBe \bSit ^{- 1} \bBe ^{\top } ) \} \non 
\end{align}
for all $i = 1, \dots , n$. 
\hfill$\Box$

\begin{lem}
\label{lem:numerator} 
\hfill
\begin{itemize}
\item[{\rm{(i)}}]
There exists $M' > 0$ such that 
\begin{align}
&\sum_{i \in \{ i_1 , \dots , i_{p + d} \} } u_i ( \y _i - \bBe ^{\top } \x _i )^{\top } \bSit ^{- 1} ( \y _i - \bBe ^{\top } \x _i ) \ge {1 \over M'} ( \min_{i \in \{ i_1 , \dots , i_{p + d} \} } u_i ) \{ \tr ( \bSit ^{- 1} ) + \tr ( \bBe \bSit ^{- 1} \bBe ^{\top } ) \} \non 
\end{align}
for any $1 \le i_1 < \dots < i_{p + d} \le n$. 
\item[{\rm{(ii)}}]
There exists $M'' > 0$ such that 
\begin{align}
&{1 \over f + \sum_{i = 1}^{n} u_i ( \y _i - \bBe ^{\top } \x _i )^{\top } \bSit ^{- 1} ( \y _i - \bBe ^{\top } \x _i ) / 2} \le M'' {\prod_{i = 1}^{n + 1 - p - d} (1 + 1 / u_i )^{1 / (n + 1 - p - d)} \over 1 + \tr ( \bSit ^{- 1} ) + \tr ( \bBe \bSit ^{- 1} \bBe ^{\top } )} \non 
\end{align}
if $u_1 \le \dots \le u_n$. 
\end{itemize}
\end{lem}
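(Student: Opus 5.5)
For part (i), I will reduce the weighted sum to a quadratic form in the stacked matrix $(\I ^{(d)}, -\bBe ^{\top})$ and use a positive-definiteness (compactness) argument of the kind used in Lemma \ref{lem:cos}. Write $\ep _i = \y _i - \bBe ^{\top } \x _i = (\I ^{(d)} , - \bBe ^{\top }) ( \y _i ^{\top } , \x _i ^{\top } )^{\top }$. Let $\W = \{ ( \y _i ^{\top } , \x _i ^{\top } )^{\top } \} _{i \in \{ i_1 , \dots , i_{p + d} \} }$, viewed as a $(d + p) \times (p + d)$ matrix $\Xbt$ whose columns are these vectors. Bounding $u_i \ge \min u$ gives
\begin{align}
\sum u_i \ep _i ^{\top } \bSit ^{- 1} \ep _i \ge ( \min u ) \tr \{ \bSit ^{- 1} (\I , - \bBe ^{\top }) \Xbt \Xbt ^{\top } (\I , - \bBe ^{\top })^{\top } \} \text{.} \non
\end{align}
The key assumption I will use is that every $(p + d)$-subset of the rows of $( \Y , \X )$ is linearly independent. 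This general-position condition is implicit in the posterior-propriety assumption and is used in the same spirit as the invertibility in Lemma \ref{lem:solution}. Under it, $\Xbt \Xbt ^{\top } \ge \lambda \I ^{(p + d)}$ with $\lambda > 0$, and I take the minimum of $\lambda$ over the finitely many subsets. Then the trace is at least $\lambda \tr \{ \bSit ^{- 1} ( \I + \bBe ^{\top } \bBe ) \} = \lambda \{ \tr ( \bSit ^{- 1} ) + \tr ( \bBe \bSit ^{- 1} \bBe ^{\top } ) \}$, which gives (i) with $M' = 1 / \lambda$.

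For part (ii), assume $u_1 \le \dots \le u_n$ and set $m = n + 1 - p - d$ and $Q = \tr ( \bSit ^{- 1} ) + \tr ( \bBe \bSit ^{- 1} \bBe ^{\top } )$. Apply (i) to the indices $\{ m, \dots , n \}$, which number exactly $p + d$ and have minimum $u_m$. Dropping the other terms gives
\begin{align}
f + \sum u_i \ep _i ^{\top } \bSit ^{- 1} \ep _i / 2 \ge f + u_m Q / (2 M') \ge c_0 \min (1, u_m) (1 + Q) \text{.} \non
\end{align}
So it suffices to show $1 / \min (1, u_m) \le 1 + 1 / u_m \le \prod_{i = 1}^{m} (1 + 1 / u_i )^{1 / m}$. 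This holds because $u_i \le u_m$ for $i \le m$, so each factor satisfies $(1 + 1 / u_i ) \ge (1 + 1 / u_m )$, and the geometric mean of $m$ such factors is at least $1 + 1 / u_m$.

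I expect the main obstacle to be justifying the positive lower bound in (i), that is, the rank condition on $( \Y , \X )$. If it must be derived rather than assumed, I would argue that posterior propriety with $n \ge p + d + 1$ forces it, or I would state it as a standing (almost-sure) assumption. Everything else is elementary.
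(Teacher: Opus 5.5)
Your proof is correct in both parts, and in each part it takes a more direct route than the paper.

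For (i), the paper completes the square around the subset least-squares fit $\B_{\i}=(\X_{\i}^{\top}\X_{\i})^{-1}\X_{\i}^{\top}\Y_{\i}$. It uses positive definiteness of $\X_{\i}^{\top}\X_{\i}$ and of the Schur complement $\Y_{\i}^{\top}\Y_{\i}-\Y_{\i}^{\top}\X_{\i}(\X_{\i}^{\top}\X_{\i})^{-1}\X_{\i}^{\top}\Y_{\i}$ to bound the sum below by a multiple of $\tr(\bSit^{-1})+\tr\{(\bBe-\B_{\i})\bSit^{-1}(\bBe-\B_{\i})^{\top}\}$. It then needs a second inequality to pass back from $\bBe-\B_{\i}$ to $\bBe$. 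Your factorization $\y_i-\bBe^{\top}\x_i=(\I^{(d)},-\bBe^{\top})(\y_i^{\top},\x_i^{\top})^{\top}$ instead gives the Loewner bound $\sum_{i}(\y_i-\bBe^{\top}\x_i)(\y_i-\bBe^{\top}\x_i)^{\top}\ge\lambda(\I^{(d)}+\bBe^{\top}\bBe)$ in one step, with the explicit constant $M'=1/\lambda$.

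For (ii), the paper bounds the reciprocal by a geometric mean over all $n+1-p-d$ sliding windows $\{i,\dots,i+p+d-1\}$, applying (i) in each window. You use only the top window $\{n+1-p-d,\dots,n\}$ and recover the geometric mean from monotonicity of $1+1/u_i$. This gives a formally sharper intermediate bound, with the single factor $1+1/u_{n+1-p-d}$, but ends at the same stated inequality. So its use in Proposition \ref{prp:trace-class_improper_multi} is unaffected.

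Both proofs rest on the same hypothesis: every $(p+d)\times(p+d)$ row-submatrix of $(\X,\Y)$ is nonsingular. The paper uses it tacitly when it asserts $(\X_{\i},\Y_{\i})^{\top}(\X_{\i},\Y_{\i})>\O^{(p+d)}$. Your suggestion that this is forced by posterior propriety is not justified. Propriety is an integrability property, and rank-plus-sample-size conditions of the kind used to guarantee it do not exclude, for instance, two identical observations. Such a tie already makes some of these submatrices singular. The right fix is the other option you offer: state it explicitly as a general-position assumption on the data. The paper makes analogous unstated assumptions in Lemmas \ref{lem:solution} and \ref{lem:cos}.
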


\noindent
{\bf Proof
.} \ \ For part (i), we can assume that $u_1 = \dots = u_n = 1$. 
Fix $1 \le i_1 < \dots < i_{p + d} \le n$, let $\i = ( i_1 , \dots , i_{p + d} )$, and let $\X _{\i } = ( \x _{i_1} , \dots , \x _{i_{p + d}} )^{\top }$ and $\Y _{\i } = ( \y _{i_1} , \dots , \y _{i_{p + d}} )^{\top }$. 
Then, since 
\begin{align}
&\begin{pmatrix} {\X _{\i }}^{\top } \X _{\i } & {\X _{\i }}^{\top } \Y _{\i } \\ {\Y _{\i }}^{\top } \X _{\i } & {\Y _{\i }}^{\top } \Y _{\i } \end{pmatrix} = \begin{pmatrix} \X _{\i } & \Y _{\i } \end{pmatrix} ^{\top } \begin{pmatrix} \X _{\i } & \Y _{\i } \end{pmatrix} > \O ^{(p + d)} \text{,} \non 
\end{align}
we have ${\X _{\i }}^{\top } \X _{\i } > \O ^{(p)}$ and ${\Y _{\i }}^{\top } \Y _{\i } - {\Y _{\i }}^{\top } \X _{\i } ( {\X _{\i }}^{\top } \X _{\i } )^{- 1} {\X _{\i }}^{\top } \Y _{\i } > \O ^{(d)}$, which imples that ${\X _{\i }}^{\top } \X _{\i } > \I ^{(p)} / M_1$ and ${\Y _{\i }}^{\top } \Y _{\i } - {\Y _{\i }}^{\top } \X _{\i } ( {\X _{\i }}^{\top } \X _{\i } )^{- 1} {\X _{\i }}^{\top } \Y _{\i } > \I ^{(d)} / M_1$ for some $M_1 > 0$. 
Therefore, 
\begin{align}
\sum_{i \in \{ i_1 , \dots , i_{p + d} \} } ( \y _i - \bBe ^{\top } \x _i )^{\top } \bSit ^{- 1} ( \y _i - \bBe ^{\top } \x _i ) \non &= \tr \{ \bSit ^{- 1} ( \Y _{\i } - \X _{\i } \bBe )^{\top } ( \Y _{\i } - \X _{\i } \bBe ) \} \non \\
&= \tr [ \bSit ^{- 1} ( \bBe - \B _{\i } )^{\top } {\X _{\i }}^{\top } \X _{\i } ( \bBe - \B _{\i } ) \non \\
&\quad + \bSit ^{- 1} \{ {\Y _{\i }}^{\top } \Y _{\i } - {\Y _{\i }}^{\top } \X _{\i } ( {\X _{\i }}^{\top } \X _{\i } )^{- 1} {\X _{\i }}^{\top } \Y _{\i } \} ] \non \\
&\ge [ \tr ( \bSit ^{- 1} ) + \tr \{ ( \bBe - \B _{\i } ) \bSit ^{- 1} ( \bBe - \B _{\i } )^{\top } \} ] / M_1 \text{,} \label{lnumeratorp1} 
\end{align}
where $\B _{\i } = ( {\X _{\i }}^{\top } \X _{\i } )^{- 1} {\X _{\i }}^{\top } \Y _{\i }$. 
Furthermore, since 
\begin{align}
\v ^{\top } \bBe \bSit ^{- 1} \bBe ^{\top } \v &\le 2 \{ \v ^{\top } ( \bBe - \B _{\i } ) \bSit ^{- 1} ( \bBe - \B _{\i } )^{\top } \v + \v ^{\top } \B _{\i } \bSit ^{- 1} {\B _{\i }}^{\top } \v \} \non 
\end{align}
for all $\v \in \mathbb{R} ^p$, 
\begin{align}
\tr ( \bSit ^{- 1} ) + \tr ( \bBe \bSit ^{- 1} \bBe ^{\top } ) &\le \tr ( \bSit ^{- 1} ) + 2 \tr \{ ( \bBe - \B _{\i } ) \bSit ^{- 1} ( \bBe - \B _{\i } )^{\top } + \B _{\i } \bSit ^{- 1} {\B _{\i }}^{\top } \} \non \\
&\le M_2 [ \tr ( \bSit ^{- 1} ) + \tr \{ ( \bBe - \B _{\i } ) \bSit ^{- 1} ( \bBe - \B _{\i } )^{\top } \} ] \label{lnumeratorp2} 
\end{align}
for some $M_2 > 0$. 
The desired result follows from (\ref{lnumeratorp1}) and (\ref{lnumeratorp2}). 

For part (ii), suppose that $u_1 \le \dots \le u_n$. 
Then 
\begin{align}
&{1 \over f + \sum_{i = 1}^{n} u_i ( \y _i - \bBe ^{\top } \x _i )^{\top } \bSit ^{- 1} ( \y _i - \bBe ^{\top } \x _i ) / 2} \non \\
&\le M_3 {1 \over 1 + \sum_{i = 1}^{n} u_i ( \y _i - \bBe ^{\top } \x _i )^{\top } \bSit ^{- 1} ( \y _i - \bBe ^{\top } \x _i )} \non \\
&\le M_3 \Big\{ \prod_{i = 1}^{n + 1 - p - d} {1 \over 1 + \sum_{i' = i}^{i + p + d - 1} u_{i'} ( \y _{i'} - \bBe ^{\top } \x _{i'} )^{\top } \bSit ^{- 1} ( \y _{i'} - \bBe ^{\top } \x _{i'} )} \Big\} ^{1 / (n + 1 - p - d)} \non \\
&\le M_3 \Big[ \prod_{i = 1}^{n + 1 - p - d} {1 \over 1 + u_i \{ \tr ( \bSit ^{- 1} ) + \tr ( \bBe \bSit ^{- 1} \bBe ^{\top } ) \} / M'} \Big] ^{1 / (n + 1 - p - d)} \non \\
&\le M_4 \Big\{ \prod_{i = 1}^{n + 1 - p - d} {\max \{ 1, 1 / u_i \} \over 1 + \tr ( \bSit ^{- 1} ) + \tr ( \bBe \bSit ^{- 1} \bBe ^{\top } )} \Big\} ^{1 / (n + 1 - p - d)} \non 
\end{align}
for some $M_3 , M_4 > 0$. 
\hfill$\Box$

\begin{lem}
\label{lem:t_beta} 
Suppose that $n + 2 c > 3 p$ and that $e > d (d - 1) / 2$. 
Then 
\begin{align}
&\int_{\mathbb{R} ^{p \times d} \times \mathcal{S} ^{>} (d)} {1 \over | \bSit |^{n / 2 + c} \{ 1 + \tr ( \bSit ^{- 1} ) + \tr ( \bBe \bSit ^{- 1} \bBe ^{\top } ) \} ^{d \{ n / 2 + c - (d + 1) / 2 \} + e}} d( \bBe , \bSit ) < \infty \text{.} \non 
\end{align}
\end{lem}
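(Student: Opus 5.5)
The plan is to compute the integral in Lemma \ref{lem:t_beta} almost explicitly by successive changes of variables, first in $\bBe$ and then in $\bSit$, reducing it to one-dimensional Beta/Gamma-type integrals whose finiteness can be read off from the exponents. Throughout write $K = d \{ n / 2 + c - (d + 1) / 2 \} + e$ for the exponent in the denominator.

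\emph{Integrating out $\bBe$.} For fixed $\bSit$, substitute $\bBe = \mathbf{Z} \bSit ^{1 / 2}$ with $\mathbf{Z} \in \mathbb{R} ^{p \times d}$. The Jacobian is $| \bSit |^{p / 2}$, and $\tr ( \bBe \bSit ^{- 1} \bBe ^{\top } ) = \| \mathbf{Z} \| ^2$ (Frobenius norm). Put $A = 1 + \tr ( \bSit ^{- 1} )$. Then the standard multivariate-$t$ integral gives
\begin{align}
\int_{\mathbb{R} ^{p \times d}} ( A + \| \mathbf{Z} \| ^2 )^{- K} d\mathbf{Z} = C A^{p d / 2 - K} \text{,} \non
\end{align}
which is finite provided $K > p d / 2$. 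I will check this condition at the end. The remaining integral is
\begin{align}
\int_{\mathcal{S} ^{>} (d)} | \bSit |^{(p - n) / 2 - c} \{ 1 + \tr ( \bSit ^{- 1} ) \} ^{- m} d\bSit \text{,} \quad m = K - p d / 2 \text{.} \non
\end{align}

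\emph{Integrating out $\bSit$.} Substitute $\W = \bSit ^{- 1}$, with Jacobian $| \W |^{- (d + 1)}$. This turns the integral into $\int | \W |^{s} ( 1 + \tr \W )^{- m} d\W$, where $s = (n - p) / 2 + c - (d + 1)$. Write
\begin{align}
( 1 + t )^{- m} = \Ga (m)^{- 1} \int_0^\infty \lambda ^{m - 1} e^{- \lambda (1 + t)} d\lambda \text{.} \non
\end{align}
Then apply Tonelli and integrate over $\W$ first. The multivariate gamma integral gives
\begin{align}
\int | \W |^{s} e^{- \lambda \tr \W } d\W = \Ga _d ( s + (d + 1) / 2 ) \lambda ^{- d \{ s + (d + 1) / 2 \} } \text{,} \non
\end{align}
which is finite iff $s + (d + 1) / 2 > (d - 1) / 2$, i.e. $s > - 1$. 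What remains is
\begin{align}
\int_0^\infty \lambda ^{m - d \{ s + (d + 1) / 2 \} - 1} e^{- \lambda } d\lambda \text{,} \non
\end{align}
which is finite iff $m > d \{ s + (d + 1) / 2 \}$.

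\emph{Checking the exponents.} First, $d \{ s + (d + 1) / 2 \} = d \{ (n - p) / 2 + c - (d + 1) / 2 \}$, so
\begin{align}
m - d \{ s + (d + 1) / 2 \} = d \{ n / 2 + c - (d + 1) / 2 \} + e - p d / 2 - d \{ (n - p) / 2 + c - (d + 1) / 2 \} = e \text{.} \non
\end{align}
This is positive since $e > d (d - 1) / 2 \ge 0$; for $d = 1$ it still requires $e > 0$, which is part of the setup. Second, $s > - 1$ is equivalent to $c > d - (n - p) / 2$, which follows from the standing assumption $c \ge d - (n - p - 1) / 2$. Third, since $s > - 1$ gives $s + (d + 1) / 2 > 0$, we get $m > d \{ s + (d + 1) / 2 \} > 0$, so $K = m + p d / 2 > p d / 2$. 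This is the condition needed in the $\bBe$ step.

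The main point to be careful about is the $\bSit$ step. One needs the exact integrability threshold of $| \W |^{s}$ near the boundary of the cone, and the $\lambda$-representation handles this cleanly. The hypothesis $n + 2 c > 3 p$ does not seem to be needed along this route, since all positivity requirements come from $c \ge d - (n - p - 1) / 2$ and $e > 0$. If some step fails, for instance if the Jacobian bookkeeping is off by a power of $| \bSit |$, I would expect that hypothesis to supply the needed slack.
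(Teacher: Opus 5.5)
Your proof is correct, and it takes a genuinely different route from the paper's. The paper never integrates exactly. It splits the exponent $K$ into $K - pd(1+\ep)/2$ on $1+\tr(\bSit^{-1})$ and $pd(1+\ep)/2$ on $1+\tr(\bBe\bSit^{-1}\bBe^{\top})$. It then uses $\{1+\tr(\bSit^{-1})\}^d \ge |\I^{(d)}+\bSit^{-1}|$, makes the same substitution in $\bBe$ that you do, and bounds the $\bBe$-factor coordinatewise by $\prod_{k,j}(1+\tilde\beta_{k,j}^2)^{-(1+\ep)/2}$. Finally it reads off finiteness from the matrix-variate beta type II constant, whose parameters are $e/d - p\ep/2$ and $(n-p)/2 + c - (d+1)/2$.

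That approach needs only inequalities and a tabulated integral, but it is lossy. The determinant--trace inequality turns the scalar requirement $e>0$ into the matrix requirement $e/d - p\ep/2 > (d-1)/2$, which is where the hypothesis $e > d(d-1)/2$ comes from. Your Gamma-integral representation of $(1+\tr\W)^{-m}$, followed by the multivariate gamma integral, instead gives the closed form $\pi^{pd/2}\,\Gamma_d\{(n-p)/2+c-(d+1)/2\}\,\Gamma(e)/\Gamma(K)$. So the integral is finite if and only if $e>0$ and $(n-p)/2+c>d$. In the paper, the hypotheses on $e$ (and $n+2c>3p$) in Proposition \ref{prp:trace-class_improper_multi} are used only through this lemma. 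Your version would therefore allow any $e>0$ there, which weakens (\ref{eq:condition_improper_multi}).

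Your suspicion about $n+2c>3p$ is also justified. The second beta type II parameter must exceed $(d-1)/2$, because the matrix dimension is $d$. That gives the condition $(n-p)/2+c>d$, which is your $s>-1$ and is guaranteed by the standing assumption $c \ge d-(n-p-1)/2$. The paper instead writes $(n-p)/2+c>p$, so that hypothesis appears to stem from a slip and is not actually needed.
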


\noindent
{\bf Proof
.} \ \ Fix $\ep > 0$. 
Then 
\begin{align}
&{1 \over | \bSit |^{n / 2 + c} \{ 1 + \tr ( \bSit ^{- 1} ) + \tr ( \bBe \bSit ^{- 1} \bBe ^{\top } ) \} ^{d \{ n / 2 + c - (d + 1) / 2 \} + e}} \non \\
&\le {1 \over | \bSit |^{(n - p) / 2 + c}} {1 \over \{ 1 + \tr ( \bSit ^{- 1} ) \} ^{d \{ n / 2 + c - (d + 1) / 2 - p (1 + \ep ) / 2 \} + e}} {1 \over | \bSit |^{p / 2}} {1 \over \{ 1 + \tr ( \bBe \bSit ^{- 1} \bBe ^{\top } ) \} ^{p d (1 + \ep ) / 2}} \text{,} \non 
\end{align}
where 
\begin{align}
&\{ 1 + \tr ( \bSit ^{- 1} ) \} ^d \ge | \I ^{(d)} + \bSit ^{- 1} | \text{.} \non 
\end{align}
Suppose that $n / 2 + c - (d + 1) / 2 - p (1 + \ep ) / 2 + e / d \ge 0$. 
Then, by making the change of variables $\bBe = \widetilde{\bBe } ( \bSit ^{1 / 2} )^{\top }$, 
\begin{align}
&\int_{\mathbb{R} ^{p \times d} \times \mathcal{S} ^{>} (d)} {1 \over | \bSit |^{n / 2 + c} \{ 1 + \tr ( \bSit ^{- 1} ) + \tr ( \bBe \bSit ^{- 1} \bBe ^{\top } ) \} ^{d \{ n / 2 + c - (d + 1) / 2 \} + e}} d( \bBe , \bSit ) \non \\
&\le \int_{\mathbb{R} ^{p \times d} \times \mathcal{S} ^{>} (d)} {1 / | \bSit |^{(n - p) / 2 + c} \over | \I ^{(d)} + \bSit ^{- 1} |^{n / 2 + c - (d + 1) / 2 - p (1 + \ep ) / 2 + e / d}} {1 \over \{ 1 + \tr ( \widetilde{\bBe } \widetilde{\bBe } ^{\top } ) \} ^{p d (1 + \ep ) / 2}} d( \widetilde{\bBe } , \bSit ) \non \\
&\le \Big\{ \prod_{k = 1}^{p} \prod_{j = 1}^{d} \int_{\mathbb{R} ^{p \times d}} {1 \over (1 + \bet _{k, j}^2 )^{(1 + \ep ) / 2}} d{\bet _{k, j}} \Big\} \int_{\mathcal{S} ^{>} (d)} {| \bSit |^{n / 2 + c - (d + 1) / 2 - p (1 + \ep ) / 2 + e / d - \{ (n - p) / 2 + c \} } \over | \I ^{(d)} + \bSit |^{n / 2 + c - (d + 1) / 2 - p (1 + \ep ) / 2 + e / d}} d\bSit \non 
\end{align}
where $\bet _{k, j} = ( \widetilde{\bBe } )_{k, j}$ for $j = 1, \dots , d$ for $k = 1, \dots , p$. 
The right-hand side is finite if $n / 2 + c - (d + 1) / 2 - p (1 + \ep ) / 2 + e / d - \{ (n - p) / 2 + c \} > - 1$ and $(n - p) / 2 + c > p$; see, for example, (5.2.2) of \cite{gn2000} for the matrix variate beta type II distribution. 
It follows from the assumptions of the lemma that if $\ep > 0$ is chosen appropriately, the conditions above are satisfied. 
\hfill$\Box$

\section*{Acknowledgments}
Research of the author was supported in part by JSPS KAKENHI Grant Number 25K21163 from Japan Society for the Promotion of Science.

\newpage
\setcounter{page}{1}
\setcounter{equation}{0}
\renewcommand{\theequation}{S\arabic{equation}}
\setcounter{section}{0}
\renewcommand{\thesection}{S\arabic{section}}
\setcounter{table}{0}
\renewcommand{\thetable}{S\arabic{table}}
\setcounter{figure}{0}
\renewcommand{\thefigure}{S\arabic{figure}}

\begin{center}
{\LARGE\bf Supplementary Materials}
\end{center}

\bigskip

\section{Proof of Proposition \ref{prp:proper}}
Here, we prove Proposition \ref{prp:proper}. 

\bigskip

\noindent
{\bf Proof of Proposition \ref{prp:proper}.} \ \ Let $I = \int p( \bBe , \bSi | \u , \y ) p( \u | \bBe , \bSi , \y ) d( \bBe , \bSi , \u )$. 
We want to show that $I < \infty $. 
By Algorithm \ref{algo:DA}, 
\begin{align}
p( \u | \bBe , \bSi , \y ) %
&= \prod_{i = 1}^{n} {{u_i}^{d / 2 + a - 1} \exp [- u_i \{ ( \y _i - \bBe ^{\top } \x _i )^{\top } \bSi ^{- 1} ( \y _i - \bBe ^{\top } \x _i ) / 2 + b \} ] \over \Ga (d / 2 + a) / \{ ( \y _i - \bBe ^{\top } \x _i )^{\top } \bSi ^{- 1} ( \y _i - \bBe ^{\top } \x _i ) / 2 + b \} ^{d / 2 + a}} \non \\
&= C_1 \Big[ \prod_{i = 1}^{n} \{ ( \y _i - \bBe ^{\top } \x _i )^{\top } \bSi ^{- 1} ( \y _i - \bBe ^{\top } \x _i ) / 2 + b \} ^{d / 2 + a} \Big] \Big[ \prod_{i = 1}^{n} \{{u_i}^{d / 2 + a - 1} \exp (- b u_i ) \} \Big] \non \\
&\quad \times \exp \Big[ - {1 \over 2} \tr \{ \U ( \Y - \X \bBe ) \bSi ^{- 1} ( \Y - \X \bBe )^{\top } \} \Big] \non 
\end{align}
for some $C_1 > 0$, where 
\begin{align}
\tr \{ \U ( \Y - \X \bBe ) \bSi ^{- 1} ( \Y - \X \bBe )^{\top } \} %
&= \tr \{ \bSi ^{- 1} ( \bBe ^{\top } \X ^{\top } \U \X \bBe + \Y ^{\top } \U \Y - 2 \Y ^{\top } \U \X \bBe ) \} \text{.} \non 
\end{align}
Meanwhile, 
\begin{align}
p( \bBe , \bSi | \u , \y ) %
&= C_2 {| \bPsi |^{(n + \nu ) / 2} \over | \bSi |^{(n + \nu + d + 1) / 2}} \etr \Big( - {1 \over 2} \bPsi \bSi ^{- 1} \Big) {1 \over | \bOm |^{d / 2}} {1 \over | \bSi |^{p / 2}} \etr \Big\{ - {1 \over 2} \bOm ^{- 1} ( \bBe - \bGa ) \bSi ^{- 1} ( \bBe - \bGa )^{\top } \Big\} \non \\
&= {C_2 \over | \bOm |^{d / 2}} {| \bPsi |^{(n + \nu ) / 2} \over | \bSi |^{(n + \nu + d + 1) / 2}} {1 \over | \bSi |^{p / 2}} \etr \Big[ - {1 \over 2} \bSi ^{- 1} \{ \bPsi + ( \bBe - \bGa )^{\top } \bOm ^{- 1} ( \bBe - \bGa ) \} \Big] \non 
\end{align}
for some $C_2 > 0$, where 
\begin{align}
&\bPsi + ( \bBe - \bGa )^{\top } \bOm ^{- 1} ( \bBe - \bGa ) = \bPsi + \bBe ^{\top } \bOm ^{- 1} \bBe + \bGa ^{\top } \bOm ^{- 1} \bGa - 2 \bGa ^{\top } \bOm ^{- 1} \bBe \text{.} \non 
\end{align}
Therefore, 
\begin{align}
&p( \bBe , \bSi | \u , \y ) p( \u | \bBe , \bSi , \y ) \non \\
&= {C_2 \over | \bOm |^{d / 2}} {| \bPsi |^{(n + \nu ) / 2} \over | \bSi |^{(n + \nu + d + 1) / 2}} {1 \over | \bSi |^{p / 2}} \etr \Big\{ - {1 \over 2} \bSi ^{- 1} ( \bPsi + \bBe ^{\top } \bOm ^{- 1} \bBe + \bGa ^{\top } \bOm ^{- 1} \bGa - 2 \bGa ^{\top } \bOm ^{- 1} \bBe ) \Big\} \non \\
&\quad \times C_1 \Big[ \prod_{i = 1}^{n} \{ ( \y _i - \bBe ^{\top } \x _i )^{\top } \bSi ^{- 1} ( \y _i - \bBe ^{\top } \x _i ) / 2 + b \} ^{d / 2 + a} \Big] \Big[ \prod_{i = 1}^{n} \{ {u_i}^{d / 2 + a - 1} \exp (- b u_i ) \} \Big] \non \\
&\quad \times \etr \Big\{ - {1 \over 2} \bSi ^{- 1} ( \bBe ^{\top } \X ^{\top } \U \X \bBe + \Y ^{\top } \U \Y - 2 \Y ^{\top } \U \X \bBe ) \Big\} \non \\
&= {C_2 \over | \bOm |^{d / 2}} {| \bPsi |^{(n + \nu ) / 2} \over | \bSi |^{(n + \nu + d + 1) / 2}} {1 \over | \bSi |^{p / 2}} \etr \Big\{ - {1 \over 2} \bSi ^{- 1} ( \bPsi + \bGa ^{\top } \bOm ^{- 1} \bGa ) \Big\} \non \\
&\quad \times C_1 \Big[ \prod_{i = 1}^{n} \{ ( \y _i - \bBe ^{\top } \x _i )^{\top } \bSi ^{- 1} ( \y _i - \bBe ^{\top } \x _i ) / 2 + b \} ^{d / 2 + a} \Big] \Big[ \prod_{i = 1}^{n} \{ {u_i}^{d / 2 + a - 1} \exp (- b u_i ) \} \Big] \non \\
&\quad \times \etr \Big[ - {1 \over 2} \bSi ^{- 1} \{ \bBe ^{\top } ( \X ^{\top } \U \X + \bOm ^{- 1} ) \bBe + \Y ^{\top } \U \Y - 2 ( \Y ^{\top } \U \X + \bGa ^{\top } \bOm ^{- 1} ) \bBe \} \Big] \text{,} \non 
\end{align}
where 
\begin{align}
&\bBe ^{\top } ( \X ^{\top } \U \X + \bOm ^{- 1} ) \bBe + \Y ^{\top } \U \Y - 2 ( \Y ^{\top } \U \X + \bGa ^{\top } \bOm ^{- 1} ) \bBe \non \\
&= ( \bBe - \V )^{\top } \W ( \bBe - \V ) + \Y ^{\top } \U \Y - \V ^{\top } \W \V \non 
\end{align}
for $\W = \X ^{\top } \U \X + \bOm ^{- 1}$ and $\V = ( \X ^{\top } \U \X + \bOm ^{- 1} )^{- 1} ( \X ^{\top } \U \Y + \bOm ^{- 1} \bGa )$. 
Thus, 
\begin{align}
I %
&= C_1 C_2 \int \Big( {1 \over | \bOm |^{d / 2}} {| \bPsi |^{(n + \nu ) / 2} \over | \bSi |^{(n + \nu + d + 1) / 2}} \etr \Big\{ - {1 \over 2} \bSi ^{- 1} ( \bPsi + \bGa ^{\top } \bOm ^{- 1} \bGa ) \Big\} {1 \over | \X ^{\top } \U \X + \bOm ^{- 1} |^{d / 2}} \non \\
&\quad \times \etr \Big\{ - {1 \over 2} \bSi ^{- 1} ( \Y ^{\top } \U \Y - \V ^{\top } \W \V ) \Big\} \non \\
&\quad \times \Big[ \prod_{i = 1}^{n} \{ ( \y _i - \bBe ^{\top } \x _i )^{\top } \bSi ^{- 1} ( \y _i - \bBe ^{\top } \x _i ) / 2 + b \} ^{d / 2 + a} \Big] \Big[ \prod_{i = 1}^{n} \{ {u_i}^{d / 2 + a - 1} \exp (- b u_i ) \} \Big] \non \\
&\quad \times (2 \pi )^{p d / 2} {| \X ^{\top } \U \X + \bOm ^{- 1} |^{d / 2} \over (2 \pi )^{p d / 2} | \bSi |^{p / 2}} \etr \Big\{ - {1 \over 2} \bSi ^{- 1} ( \bBe - \V )^{\top } \W ( \bBe - \V ) \Big\} \Big) d( \bBe , \bSi , \u ) \text{.} \label{pproperp1} 
\end{align}

We have 
\begin{align}
&\prod_{i = 1}^{n} \{ ( \y _i - \bBe ^{\top } \x _i )^{\top } \bSi ^{- 1} ( \y _i - \bBe ^{\top } \x _i ) / 2 + b \} ^{d / 2 + a} \non \\
&\le \Big\{ \prod_{i = 1}^{n} ( {\y _i}^{\top } \bSi ^{- 1} \y _i + {\x _i}^{\top } \bBe \bSi ^{- 1} \bBe ^{\top } \x _i + b) \Big\} ^{d / 2 + a} \non \\
&= \Big[ \prod_{i = 1}^{n} \{ \tr ( \bSi ^{- 1} \y _i {\y _i}^{\top } ) + \tr ( \bBe \bSi ^{- 1} \bBe ^{\top } \x _i {\x _i}^{\top } ) + b \} \Big] ^{d / 2 + a} \non \\
&\le M_3 \{ \tr ( \bSi ^{- 1} ) + \tr ( \bBe \bSi ^{- 1} \bBe ^{\top } ) + 1 \} ^{n (d / 2 + a)} \non \\
&\le M_4 [ \{ \tr ( \bSi ^{- 1} ) \} ^{n (d / 2 + a)} + \{ \tr ( \bBe \bSi ^{- 1} \bBe ^{\top } ) \} ^{n (d / 2 + a)} + 1] \non 
\end{align}
for some $M_3 , M_4 > 0$, where 
\begin{align}
\{ \tr ( \bBe \bSi ^{- 1} \bBe ^{\top } ) \} ^{n (d / 2 + a)} &\le [ 2 \tr \{ ( \bBe - \V ) \bSi ^{- 1} ( \bBe - \V )^{\top } \} + 2 \tr ( \V \bSi ^{- 1} \V ^{\top } )]^{n (d / 2 + a)} \non \\
&\le M_5 [ 2 \tr \{ ( \bBe - \V ) \bSi ^{- 1} ( \bBe - \V )^{\top } \} ]^{n (d / 2 + a)} + M_5 \{ 2 \tr ( \V \bSi ^{- 1} \V ^{\top } ) \} ^{n (d / 2 + a)} \non 
\end{align}
for some $M_5 > 0$. 
Since $\I \le M_6 \A ^{- 1} \non \le M_6 \bOm ^{- 1} \non \le M_6 \W $ for some $M_6 > 0$, 
\begin{align}
\tr ( \V \bSi ^{- 1} \V ^{\top } ) &= \tr ( \bSi ^{- 1} \V ^{\top } \V ) \le M_6 \tr ( \bSi ^{- 1} \V ^{\top } \W \V ) \non \\
&= M_6 \tr \{ \bSi ^{- 1} ( \X ^{\top } \U \Y + \bOm ^{- 1} \bGa )^{\top } ( \X ^{\top } \U \X + \bOm ^{- 1} )^{- 1} ( \X ^{\top } \U \Y + \bOm ^{- 1} \bGa ) \} \text{.} \non 
\end{align}
Therefore, 
\begin{align}
\tr ( \V \bSi ^{- 1} \V ^{\top } ) &\le 2 M_6 \tr [ \bSi ^{- 1} \{ \Y ^{\top } \U \X ( \X ^{\top } \U \X )^{- 1} \X ^{\top } \U \Y + \bGa ^{\top } \bOm ^{- 1} \bOm \bOm ^{- 1} \bGa \} ] \non \\
&\le 2 M_6 \tr [ \bSi ^{- 1} \{ \Y ^{\top } \U \Y + ( \X ^{\top } \U \Y + \A ^{- 1} \B )^{\top } ( \X ^{\top } \U \X + \A ^{- 1} )^{- 1} ( \X ^{\top } \U \Y + \A ^{- 1} \B ) \} ] \non \\
&\le 2 M_6 \tr \{ \bSi ^{- 1} (3 \Y ^{\top } \U \Y + 2 \B ^{\top } \A ^{- 1} \B ) \} \non \\
&\le M_7 [ \{ \tr ( \bSi ^{- 1} ) \} \tr ( \Y ^{\top } \U \Y ) + \tr \bSi ^{- 1} ] \le M_8 \{ \tr ( \bSi ^{- 1} ) \} \{ ( u_1 + \dots + u_n ) + 1 \} \non 
\end{align}
for some $M_7 , M_8 > 0$. 
Thus, 
\begin{align}
&\{ \tr ( \bBe \bSi ^{- 1} \bBe ^{\top } ) \} ^{n (d / 2 + a)} \non \\
&\le M_5 [ 2 \tr \{ ( \bBe - \V ) \bSi ^{- 1} ( \bBe - \V )^{\top } \} ]^{n (d / 2 + a)} + M_9 \{ \tr ( \bSi ^{- 1} ) \} ^{n (d / 2 + a)} \{ ( u_1 + \dots + u_n ) + 1 \} ^{n (d / 2 + a)} \non 
\end{align}
for some $M_9 > 0$. 
Hence, 
\begin{align}
&\prod_{i = 1}^{n} \{ ( \y _i - \bBe ^{\top } \x _i )^{\top } \bSi ^{- 1} ( \y _i - \bBe ^{\top } \x _i ) / 2 + b \} ^{d / 2 + a} \non \\
&\le M_{10} ([ \tr \{ ( \bBe - \V ) \bSi ^{- 1} ( \bBe - \V )^{\top } \} ]^{n (d / 2 + a)} + \{ \tr ( \bSi ^{- 1} ) \} ^{n (d / 2 + a)} \{ 1 + ( u_1 + \dots + u_n )^{n (d / 2 + a)} \} + 1) \non \\
&\le M_{10} (1 + \{ \tr ( \bSi ^{- 1} ) \} ^{n (d / 2 + a)} + [ \tr \{ ( \bBe - \V ) \bSi ^{- 1} ( \bBe - \V )^{\top } \} ]^{n (d / 2 + a)} ) \{ 1 + ( u_1 + \dots + u_n )^{n (d / 2 + a)} \} \label{pproperp2} 
\end{align}
for some $M_{10} > 0$. 

By (\ref{pproperp1}) and (\ref{pproperp2}), 
\begin{align}
I &\le M_{11} \int \Big\{ {1 \over | \bOm |^{d / 2}} {| \bPsi |^{(n + \nu ) / 2} \over | \bSi |^{(n + \nu + d + 1) / 2}} \etr \Big\{ - {1 \over 2} \bSi ^{- 1} ( \bPsi + \bGa ^{\top } \bOm ^{- 1} \bGa ) \Big\} {1 \over | \X ^{\top } \U \X + \bOm ^{- 1} |^{d / 2}} \non \\
&\quad \times \etr \Big\{ - {1 \over 2} \bSi ^{- 1} ( \Y ^{\top } \U \Y - \V ^{\top } \W \V ) \Big\} (1 + \{ \tr ( \bSi ^{- 1} ) \} ^{n (d / 2 + a)} + [ \tr \{ ( \bBe - \V ) \bSi ^{- 1} ( \bBe - \V )^{\top } \} ]^{n (d / 2 + a)} ) \non \\
&\quad \times \Big[ \{ 1 + ( u_1 + \dots + u_n )^{n (d / 2 + a)} \} \prod_{i = 1}^{n} \{ {u_i}^{d / 2 + a - 1} \exp (- b u_i ) \} \Big] \non \\
&\quad \times {| \X ^{\top } \U \X + \bOm ^{- 1} |^{d / 2} \over (2 \pi )^{p d / 2} | \bSi |^{p / 2}} \etr \Big\{ - {1 \over 2} \bSi ^{- 1} ( \bBe - \V )^{\top } \W ( \bBe - \V ) \Big\} \Big\} d( \bBe , \bSi , \u ) \non \\
&\le M_{12} \int \Big( {| \bPsi |^{(n + \nu ) / 2} \over | \bSi |^{(n + \nu + d + 1) / 2}} \etr \Big[ - {1 \over 2} \bSi ^{- 1} \{ \bPsi + \bGa ^{\top } \bOm ^{- 1} \bGa + \Y ^{\top } \U \Y \non \\
&\quad - ( \X ^{\top } \U \Y + \bOm ^{- 1} \bGa )^{\top } ( \X ^{\top } \U \X + \bOm ^{- 1} )^{- 1} ( \X ^{\top } \U \Y + \bOm ^{- 1} \bGa ) \} \Big] \non \\
&\quad \times [1 + \{ \tr ( \bSi ^{- 1} ) \} ^{n (d / 2 + a)} ]^{n (d / 2 + a)} \Big[ \prod_{i = 1}^{n} \{ {u_i}^{d / 2 + a - 1} \exp (- b u_i / 2) \} \Big] \non \\
&\quad \times {| \X ^{\top } \U \X + \bOm ^{- 1} |^{d / 2} \over (2 \pi )^{p d / 2} | \bSi |^{p / 2}} \etr \Big\{ - {1 \over 2} \bSi ^{- 1} ( \bBe - \V )^{\top } ( \W / 2) ( \bBe - \V ) \Big\} \Big) d( \bBe , \bSi , \u ) \non \\
&\le M_{13} \int \Big( {| \C ^{- 1} + \B ^{\top } \A ^{- 1} \B + \Y ^{\top } \U \Y |^{(n + \nu ) / 2} \over | \bSi |^{(n + \nu + d + 1) / 2}} \etr \Big[ - {1 \over 2} \bSi ^{- 1} \{ \bPsi + \bGa ^{\top } \bOm ^{- 1} \bGa + \Y ^{\top } \U \Y \non \\
&\quad - ( \X ^{\top } \U \Y + \bOm ^{- 1} \bGa )^{\top } ( \X ^{\top } \U \X + \bOm ^{- 1} )^{- 1} ( \X ^{\top } \U \Y + \bOm ^{- 1} \bGa ) \} \Big] \non \\
&\quad \times [1 + \{ \tr ( \bSi ^{- 1} ) \} ^{n (d / 2 + a)} ]^{n (d / 2 + a)} \prod_{i = 1}^{n} \{ {u_i}^{d / 2 + a - 1} \exp (- b u_i / 2) \} \Big) d( \bSi , \u ) \non 
\end{align}
for some $M_{11} , M_{12} , M_{13} > 0$. 
Note that 
\begin{align}
&\bPsi + \bGa ^{\top } \bOm ^{- 1} \bGa + \Y ^{\top } \U \Y - ( \X ^{\top } \U \Y + \bOm ^{- 1} \bGa )^{\top } ( \X ^{\top } \U \X + \bOm ^{- 1} )^{- 1} ( \X ^{\top } \U \Y + \bOm ^{- 1} \bGa ) \non \\
&\hspace{- 1cm} = \C ^{- 1} + \B ^{\top } \A ^{- 1} \B + \Y ^{\top } \U \Y + \Y ^{\top } \U \Y - (2 \X ^{\top } \U \Y + \A ^{- 1} \B )^{\top } (2 \X ^{\top } \U \X + \A ^{- 1} )^{- 1} (2 \X ^{\top } \U \Y + \A ^{- 1} \B ) \ge \C \non 
\end{align}
since 
\begin{align}
&\begin{pmatrix} \B ^{\top } \A ^{- 1} \B + 2 \Y ^{\top } \U \Y & (2 \X ^{\top } \U \Y + \A ^{- 1} \B )^{\top } \\ 2 \X ^{\top } \U \Y + \A ^{- 1} \B & 2 \X ^{\top } \U \X + \A ^{- 1} \end{pmatrix} \non \\
&= \begin{pmatrix} \B ^{\top } & \O ^{(d, p)} \\ \O ^{(p)} & \I ^{(p)} \end{pmatrix} \begin{pmatrix} \A ^{- 1} & \A ^{- 1} \\ \A ^{- 1} & \A ^{- 1} \end{pmatrix} \begin{pmatrix} \B & \O ^{(p)} \\ \O ^{(p, d)} & \I ^{(p)} \end{pmatrix} + 2 \begin{pmatrix} \Y ^{\top } & \O ^{(d, n)} \\ \O ^{(p, n)} & \X ^{\top } \end{pmatrix} \begin{pmatrix} \U & \U \\ \U & \U \end{pmatrix} \begin{pmatrix} \Y & \O ^{(n, p)} \\ \O ^{(n, d)} & \X \end{pmatrix} \ge \O ^{(p + d)} \text{.} \non 
\end{align}
Then 
\begin{align}
I &\le M_{14} \int \Big( {|(1 + \tr \U ) \I ^{(d)} |^{(n + \nu ) / 2} \over | \bSi |^{(n + \nu + d + 1) / 2}} \etr \Big( - {1 \over 2} \bSi ^{- 1} \C \Big) [1 + \{ \tr ( \bSi ^{- 1} ) \} ^{n (d / 2 + a)} ]^{n (d / 2 + a)} \non \\
&\quad \times \prod_{i = 1}^{n} \{ {u_i}^{d / 2 + a - 1} \exp (- b u_i / 2) \} \Big) d( \bSi , \u ) \non 
\end{align}
for some $M_{14} > 0$. 
Since the right-hand side of the above inequality is finite, the result follows. 
\hfill$\Box$

\end{document}